\documentclass[11pt]{amsart}

\usepackage[utf8]{inputenc}
\usepackage[T1]{fontenc}

\usepackage{amsmath,amssymb,amsfonts,textcomp,amsthm,xifthen,graphicx,color,pgfplots}
\usepackage{ifthen}

\usepackage{enumerate}
\usepackage{enumitem}

\usepackage{fullpage}

\usepackage[utf8]{inputenc}

\usepackage[pdftex,
            pdfauthor={Thomas F\"uhrer and Michael Karkulik},
            pdftitle={Least-squares FEMs for parabolic problems},
            ]{hyperref}

\usepackage{pgfplots}
\usepgfplotslibrary{external}
\usepgfplotslibrary{colorbrewer}
\tikzexternalize

\usepackage[capitalize,nameinlink]{cleveref}[0.19]
\crefname{section}{section}{sections}
\crefname{subsection}{subsection}{subsections}
\Crefname{section}{Section}{Sections}
\Crefname{subsection}{Subsection}{Subsections}
\Crefname{figure}{Figure}{Figures}
\crefformat{equation}{\textup{#2(#1)#3}}
\crefrangeformat{equation}{\textup{#3(#1)#4--#5(#2)#6}}
\crefmultiformat{equation}{\textup{#2(#1)#3}}{ and \textup{#2(#1)#3}}
{, \textup{#2(#1)#3}}{, and \textup{#2(#1)#3}}
\crefrangemultiformat{equation}{\textup{#3(#1)#4--#5(#2)#6}}%
{ and \textup{#3(#1)#4--#5(#2)#6}}{, \textup{#3(#1)#4--#5(#2)#6}}{, and \textup{#3(#1)#4--#5(#2)#6}}
\Crefformat{equation}{#2Equation~\textup{(#1)}#3}
\Crefrangeformat{equation}{Equations~\textup{#3(#1)#4--#5(#2)#6}}
\Crefmultiformat{equation}{Equations~\textup{#2(#1)#3}}{ and \textup{#2(#1)#3}}
{, \textup{#2(#1)#3}}{, and \textup{#2(#1)#3}}
\Crefrangemultiformat{equation}{Equations~\textup{#3(#1)#4--#5(#2)#6}}%
{ and \textup{#3(#1)#4--#5(#2)#6}}{, \textup{#3(#1)#4--#5(#2)#6}}{, and \textup{#3(#1)#4--#5(#2)#6}}
\crefdefaultlabelformat{#2\textup{#1}#3}


%
\newcommand{\ts}{{\ensuremath{k}}}

\newtheorem{theorem}{Theorem}
\newtheorem{lemma}[theorem]{Lemma}
\newtheorem{corollary}[theorem]{Corollary}
\newtheorem{proposition}[theorem]{Proposition}
\newtheorem{remark}[theorem]{Remark}

\newcommand{\err}{\operatorname{err}}

\newcommand{\blf}{b} 
\newcommand{\blfTot}{a} 
\newcommand{\blfns}{\widetilde\blf} 

\newcommand{\ep}{\mathcal{E}}

\def\eps{\varepsilon}

\def\enorm#1{|\hspace*{-.5mm}|\hspace*{-.5mm}|#1|\hspace*{-.5mm}|\hspace*{-.5mm}|}


\newcommand{\ip}[2]{(#1\hspace*{.5mm},#2)}

\newcommand{\norm}[3][]{#1\|#2#1\|_{#3}}

\newcommand{\diam}{\mathrm{diam}}
\newcommand{\wilde}{\widetilde}

\def\div{{\rm div\,}}

\newcommand{\Hdivset}[1]{\boldsymbol{H}(\div;#1)}

\newcommand\CF{C_\mathrm{F}} 

\newcommand{\set}[2]{\big\{#1\,:\,#2\big\}}

\newcommand{\pwnabla}{\nabla_\TT}

\newcommand{\RT}{\ensuremath{\mathcal{RT}}}

\newcommand{\R}{\ensuremath{\mathbb{R}}}
\newcommand{\N}{\ensuremath{\mathbb{N}}}
\newcommand{\HH}{\ensuremath{{\boldsymbol{H}}}}

\newcommand{\LL}{\ensuremath{\boldsymbol{L}}}

\newcommand{\vv}{\ensuremath{\boldsymbol{v}}}
\newcommand{\ww}{\ensuremath{\boldsymbol{w}}}
\newcommand{\TT}{\ensuremath{\mathcal{T}}}

\newcommand{\cS}{\ensuremath{\mathcal{S}}}

\newcommand{\PP}{\ensuremath{\mathcal{P}}}
\newcommand{\OO}{\ensuremath{\mathcal{O}}}

\newcommand{\Amat}{\ensuremath{\boldsymbol{A}}}
\newcommand{\bbeta}{\ensuremath{\boldsymbol{\beta}}}


\newcommand{\ssigma}{{\boldsymbol\sigma}}
\newcommand{\ttau}{{\boldsymbol\tau}}

\newcommand{\cchi}{{\boldsymbol\chi}}

\newcommand{\uu}{\boldsymbol{u}}


\newcounter{constantsnumber}
\def\setc#1{
  \ifthenelse{\equal{#1}{poinc}}{C_{\rm edge}}{ 
   \refstepcounter{constantsnumber}
   \label{const#1}C_{\theconstantsnumber}}}

\def\c#1{
  \ifthenelse{\equal{#1}{poinc}}{C_{\rm edge}}{ 
    C_{\ref{const#1}}}}

\begin{document}

\title[]{
New a priori analysis of first-order system least-squares finite element methods for parabolic problems}
\date{\today}
\author{Thomas F\"{u}hrer}
\address{Facultad de Matem\'{a}ticas, Pontificia Universidad Cat\'{o}lica de Chile, Santiago, Chile}
\email{tofuhrer@mat.uc.cl \emph{(corresponding author)}}

\author{Michael Karkulik}
\address{Departamento de Matem\'{a}tica,
Universidad T\'{e}cnica Federico Santa Mar\'{i}a, Valpara\'{i}so, Chile}
\email{michael.karkulik@usm.cl}

\thanks{{\bf Acknowledgment.} 
This work was supported by CONICYT through FONDECYT projects 11170050 ``Least-squares methods for obstacle problems''
(TF) and 1170672 ``Fast space-time discretizations for fractional evolution equations'' (MK)}

\keywords{Least-squares method, parabolic problem,
a priori analysis, elliptic projection}
\subjclass[2010]{65N30, 
                 65N12, 
                 35F15} 
\begin{abstract}
We provide new insights into the a priori theory for a time-stepping scheme based on least-squares
finite element methods for parabolic first-order systems.
The elliptic part of the problem is of general reaction-convection-diffusion type.
The new ingredient in the analysis is an elliptic projection operator defined via a non-symmetric bilinear form, although
the main bilinear form corresponding to the least-squares functional is symmetric.
This new operator allows to prove optimal error estimates in the natural norm associated to the problem and, under
additional regularity assumptions, in the $L^2$ norm.
Numerical experiments are presented which confirm our theoretical findings.
\end{abstract}
\maketitle

\section{Introduction}

In this work we analyse a time-stepping scheme based on least-squares finite element methods for a
first-order reformulation of the evolution problem
\begin{subequations}\label{eq:model}
\begin{alignat}{2}
  \dot{u}-\div \Amat\nabla u -\bbeta\cdot\nabla u + \gamma u &=f &\quad&\text{in } (0,T)\times\Omega, \label{eq:model:pde} \\
  u &= 0 &\quad&\text{in } (0,T)\times \partial\Omega, \\
  u(0,\cdot) &= u^0 &\quad&\text{in } \Omega.
\end{alignat}
\end{subequations}
The coefficients $\Amat\in L^\infty(\Omega)_\mathrm{sym}^{d\times d}$, 
$\bbeta\in L^\infty(\Omega)^d$, $\gamma\in L^\infty(\Omega)$ are assumed to satisfy
\begin{align}\label{eq:coeff}
  L^\infty(\Omega)\ni\tfrac12\div\bbeta+\gamma \geq 0, \quad\text{and}\quad 
  \lambda_\mathrm{min}(\Amat) \geq \alpha_0 > 0 \quad\text{a.e. in } \Omega.
\end{align}
This ensures that~\cref{eq:model} admits a unique solution. (Here, $\lambda_\mathrm{min}$ denotes the minimal
eigenvalue.)
We refer to~\cite{evans} for the analytical treatment and to~\cite{thomee} for an overview and analysis of different
Galerkin methods.
For a simpler presentation we suppose that the coefficients are independent of $t$, however, we stress 
that our analysis also applies in the case of time-dependent coefficients, if~\cref{eq:coeff} holds uniformly in
time.

Discretizations by finite elements in space and finite differences in time (also called time-stepping) are attractive for
the numerical approximation of solutions of evolutions equations such as~\cref{eq:model}. This is due to the fact
that such discretizations are much simpler to implement then discretizations of the space-time domain.
In the context of least-squares finite element methods such time-stepping schemes have been analysed
in, e.g.,~\cite{SplitLSQ,YangI,YangII}. The analysis in these works include the Euler and Crank-Nicolson scheme in time.
Another time-stepping approach is considered in~\cite{starkeParabolicI,starkeParabolicII}.
We refer also to~\cite[Chapter~9]{BochevGunzburgerLSQ} for an overview on the whole topic.
In accordance with the last reference, cf.~\cite[Section~2.2]{BochevGunzburgerLSQ}, the first step to obtain a
practical least-squares discretization is to work with a first-order reformulation of the PDE at hand. This way,
high regularity conditions on the discrete space are avoided, and condition numbers of underlying systems are kept at
a reasonable level. If the first-order reformulation of~\cref{eq:model} allows for the so-called \emph{splitting property},
optimal a priori error estimates are known, cf.~\cite{SplitLSQ,BochevGunzburgerLSQ}.
It can be easily shown~\cite{SplitLSQ} that the method then reduces to a time-stepping Galerkin method in the primal variable,
and optimal a priori error estimates can be extracted from the classical reference~\cite[Chapter~1]{thomee}.
We refer to~\cref{section:decoupling} for more details.
This splitting property is quite specific and in general only holds if $\bbeta=0=\gamma$ in~\cref{eq:model}.
Available a priori analysis of least-squares finite element time stepping schemes does not extend to the general case
$(\bbeta,\gamma)\neq (0,0)$. In fact, if $(\bbeta,\gamma)\neq (0,0)$, current literature provides optimal a priori error estimates
under severe and impractical restrictions, e.g., the use of different meshes for approximations of 
the variables of the first-order system.

The purpose of the work at hand is to provide optimal error estimates in the $L^2$ norm for the scalar variable and in
the natural norm associated with the problem, in particular in the case $(\bbeta,\gamma)\neq (0,0)$.
Since the pioneering work of Mary Wheeler~\cite{wheeler},
one of the main tools in the a priori analysis of time-stepping Galerkin schemes is the \emph{elliptic projection operator},
defined as the discrete solution operator on the \textit{spatial part} of the parabolic PDE.
The main tool in the following analysis is a new \emph{elliptic projection operator}, which is defined with respect to only
part of the spatial terms of the overall bilinear form. In particular, these terms constitute a non-symmetric bilinear form.
At first glance this seems to be somehow counter-intuitive, since the overall bilinear form associated to the least-squares
method is symmetric.
Nevertheless, it turns out that this is the right choice, and it allows us to mimic some of the main ideas of
well-known proofs for Galerkin finite element methods.
Moreover, for the proof of optimal $L^2$ error estimates we show, under some regularity assumptions, that the $L^2$
error between a sufficient regular function and its elliptic projection is of higher order.
This is done by considering solutions corresponding to problems that involve the aforementioned non-symmetric bilinear form
and by using duality arguments.
We stress that everything has to be analized carefully due to the fact that the bilinear form depends on the (arbitrarily
small) temporal step-size.

To keep presentation simple, we consider a backward Euler scheme for time discretization. We stress that we have no
reason to believe that our results do not extend correspondingly to a higher-order discretization in time (e.g., Crank-Nicolson).
Likewise, the main ideas of our analysis are independent of the particular choice of (spatial) finite element spaces.
However, to obtain convergence orders in terms of powers of the mesh-width $h$ we stick to standard conforming
finite element spaces (piecewise polynomials for the scalar variable and Raviart-Thomas elements for the vector-valued
variable) on the same simplical mesh.

\subsection{Outline}
The remainder of this work is given as follows: In~\cref{sec:main}, we introduce our model problem and
discrete method and discuss necessary definitions and results needed throughout our work.
In particular, in~\cref{sec:duality}, we introduce and analyse the elliptic projection operator.
The~\cref{sec:apriori} contains the main results (stability of the discrete method and optimal convergence
rates in $L^2$ and energy norm) and their proofs. In~\cref{sec:ext} we show how to extend our ideas
to a different problem. Finally, in~\cref{sec:num} we present numerical examples.

\section{Least-squares methods}\label{sec:main}

\subsection{Notation}
Let $\Omega\subset \R^d$ denote a bounded Lipschitz domain with polygonal boundary $\Gamma := \partial \Omega$.
Let $\LL^2(\Omega) := L^2(\Omega)^d$.
We use the usual notation for Sobolev spaces $H_0^1(\Omega)$, $H^s(\Omega)$, $\HH^s(\Omega):= H^s(\Omega)^d$,
$\Hdivset\Omega : = \set{\ssigma \in\LL^2(\Omega)}{\div\ssigma\in L^2(\Omega)}$.
Recall that $\norm{u}{} \leq \CF \norm{\nabla u}{}$ for all $u\in H_0^1(\Omega)$, where $\norm\cdot{}$ denotes the
$L^2(\Omega)$ norm. 
Moreover, the $L^2(\Omega)$ scalar product is denoted by $\ip\cdot\cdot$.

We work with the space
\begin{align*}
  U := H_0^1(\Omega) \times \Hdivset\Omega
\end{align*}
equipped with the $\ts$-dependent norm
\begin{align*}
  \norm{(u,\ssigma)}\ts^2 := \norm{\nabla u}{}^2 + \norm{\ssigma}{}^2 + \ts\norm{\div\ssigma}{}^2.
\end{align*}
The parameter $\ts>0$ corresponds to the time step later on.

Let $0=:t_0<t_1<\dots<t_N:=T$ denote an arbitrary partition of the time interval $[0,T]$. We call $\ts_n :=
t_n-t_{n-1}$ a time step.
For a simpler notation, we use upper indices to indicate time evaluations, i.e., for a time-dependent function $v =
v(t;\cdot)$ we use the notation $v^n(\cdot) := v(t_n;\cdot)$.
Moreover, we use similar notations for discrete quantities which however are not defined for all $t\in [0,T]$. 
For instance, later on $u_h^n$ will denote an approximation of $u^n = u(t_n;\cdot)$ where $u$ is the exact solution
of~\cref{eq:model}.

Operators involving derivatives with respect to the spatial variables are denoted by $\nabla$ and $\div$, whereas 
the first and second derivate with respect to the time variable are denoted by $\dot{(\cdot)}$, $\ddot{(\cdot)}$.
For vector-valued functions the time derivative is understood componentwise.

We write $A\lesssim B$ if there exists $C>0$ such that $A\leq C B$ and $C$ is independent of the quantities of interest.
Analogously we define $A\gtrsim B$. If both $A\lesssim B$ and $B\lesssim A$ hold, then we write $A\simeq B$.
In this work we use these notations if the involved constants only depend on the coefficients $\Amat,\bbeta,\gamma$, the
final time $T$, $\Omega$, some fixed polynomial degree $p\in\N_0$ of the discrete spaces, and shape-regularity of the
underlying triangulation.

\subsection{First-order system and least-squares method}
Let $u$ denote the exact solution of~\cref{eq:model}. 
Introducing $\ssigma:= \Amat\nabla u$, we rewrite~\cref{eq:model} as the equivalent first-order system
\begin{subequations}\label{eq:fo}
  \begin{alignat}{2}
    \dot{u} - \div\ssigma -\bbeta\cdot\nabla u + \gamma u &= f &\quad&\text{in } (0,T)\times\Omega, \label{eq:fo:a} \\
    \ssigma - \Amat\nabla u &= 0 &\quad&\text{in } (0,T)\times\Omega, \label{eq:fo:b} \\
    u &= 0 &\quad&\text{on } (0,T)\times\Gamma, \label{eq:fo:c} \\
    u(0,\cdot) &= u^0 &\quad&\text{in }\Omega. \label{eq:fo:d}
\end{alignat}
\end{subequations}

We approximate $\dot{u}^n$ by a backward difference, i.e., $\dot{u}^n \approx (u^n-u^{n-1})/\ts_n$.
This approach naturally leads to a time-stepping method:
given an approximation $\wilde u^{n-1}$ of $u^{n-1}$, we
consider the problem of finding $(u,\ssigma)\in U = H_0^1(\Omega)\times\Hdivset\Omega$ such that
\begin{subequations}\label{eq:model:euler}
\begin{align}
  \frac{u-\wilde u^{n-1}}{\ts_n} - \div\ssigma -\bbeta\cdot\nabla u + \gamma u &= f^n, \\
  \ssigma - \Amat\nabla u &= 0
\end{align}
\end{subequations}
for $n=1,\dots,N$. The last equations will be put into a variational scheme using a least-squares approach.
For given data $g,w\in L^2(\Omega)$ we consider the functional
\begin{align}\label{eq:def:lsqfun}
  J_n(\uu;g,w) := \ts_n \norm{\frac{u-w}{\ts_n}- \div\ssigma -\bbeta\cdot\nabla u + \gamma u-g}{}^2
  + \norm{\Amat^{1/2}\nabla u-\Amat^{-1/2}\ssigma}{}^2.
\end{align}
for $\uu = (u,\ssigma)\in U$.
Note that~\cref{eq:model:euler} is an elliptic equation since $\ts_n>0$ and therefore admits a unique solution that
can also be characterized as the unique minimizer of the problem
\begin{align}\label{eq:min:euler}
  \min_{\vv\in U} J_n(\vv;f^n,\wilde u^{n-1}).
\end{align}
It is well-known~\cite{CLMMcC1994} that the functional satisfies the norm equivalence
\begin{align*}
  \norm{\nabla u}{}^2 + \norm{\ssigma}{}^2 + \norm{\div\ssigma}{}^2 \simeq J_n(\uu;0,0)
\end{align*}
for all $\uu = (u,\ssigma)\in U$. 
In particular, standard textbook knowledge of the least-squares methodology, see~\cite{BochevGunzburgerLSQ}, shows that
there exists a unique minimizer if the space $U$ in~\cref{eq:min:euler} is replaced by some closed subspace.
Note that the norm equivalence constants depend on the time step $\ts_n$ in general.

We introduce the bilinear forms $\blf_n,\blfTot_n : U\times U \to \R$, 
\begin{align*}
  \blf_n(\uu,\vv) &:= \ip{u}{-\div\ttau-\bbeta\cdot\nabla v+\gamma v} + \ip{-\div\ssigma-\bbeta\cdot\nabla u + \gamma u}{v}
  \\ &\qquad+ \ts_n \ip{-\div\ssigma-\bbeta\cdot\nabla u + \gamma u}{-\div\ttau-\bbeta\cdot\nabla v+\gamma v}
  \\ &\qquad + \ip{\Amat^{1/2}\nabla u-\Amat^{-1/2}\ssigma}{\Amat^{1/2}\nabla v-\Amat^{-1/2}\ttau}, \\
  \blfTot_n(\uu,\vv) &:= \frac1{\ts_n}\ip{u}v + \blf_n(\uu,\vv)
\end{align*}
for all $\uu=(u,\ssigma),\vv=(v,\ttau)\in U$.
Moreover, define for some given $w\in L^2(\Omega)$, the functional $F_n : U\to \R$ by
\begin{align*}
  F_n(\vv;f^n,w) := \ts_n\ip{f^n+\frac{w}{\ts_n}}{\frac{v}{\ts_n}-\div\ttau-\bbeta\cdot\nabla v+\gamma v}
\end{align*}
for $\vv=(v,\ttau)\in U$.

The \emph{backward Euler} scheme reads as follows: For all $n=1,\dots,N$ let $U_h^n \subseteq U$ denote a closed
subspace. Let $u_h^0 \in L^2(\Omega)$ be some approximation of the initial data, i.e., $u_h^0\approx u^0$.
For $n=1,\dots,N$ we seek solutions $\uu_h^n = (u_h^n,\ssigma_h^n)\in U_h^n$ of
\begin{align}\label{eq:euler}
  \blfTot_n(\uu_h^n,\vv_h) = F_n(\vv_h;f^n,u_h^{n-1}) \quad\text{for all } \vv_h = (v_h,\ttau_h)\in U_h^n.
\end{align}

From our discussion above we conclude:
\begin{theorem}\label{thm:euler}
  For all $n=1,\dots,N$ Problem~\cref{eq:euler} admits a unique solution $\uu_h^n\in U_h^n$.
  \qed
\end{theorem}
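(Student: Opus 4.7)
The plan is to recognize \cref{eq:euler} as the normal equation (first-order optimality condition) for the strictly convex, coercive quadratic functional $\vv \mapsto J_n(\vv; f^n, u_h^{n-1})$, and then to invoke the Lax-Milgram lemma on the closed subspace $U_h^n \subset U$. Since $\blfTot_n$ is symmetric, this is equivalent to invoking the Riesz representation theorem, or to checking that the strictly convex quadratic $\vv \mapsto \tfrac12\blfTot_n(\vv,\vv) - F_n(\vv; f^n, u_h^{n-1})$ attains a unique minimum on $U_h^n$.

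First, I would verify the algebraic identity $\blfTot_n(\uu, \uu) = J_n(\uu; 0, 0)$. Writing $L(\uu) := -\div\ssigma - \bbeta\cdot\nabla u + \gamma u$ and expanding the first square in \cref{eq:def:lsqfun} with $w=0$, $g=0$, one has $\ts_n\norm{\tfrac{u}{\ts_n} + L(\uu)}{}^2 = \tfrac{1}{\ts_n}\norm{u}{}^2 + 2\ip{u}{L(\uu)} + \ts_n\norm{L(\uu)}{}^2$, and adding the second square of \cref{eq:def:lsqfun} one recovers exactly $\tfrac{1}{\ts_n}\norm{u}{}^2 + \blf_n(\uu,\uu) = \blfTot_n(\uu,\uu)$. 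A polarization argument identifies $\blfTot_n(\cdot,\cdot)$ with half the Hessian of $J_n(\cdot;0,0)$, and the affine part of $J_n(\cdot;f^n,u_h^{n-1})$ matches $F_n(\cdot;f^n,u_h^{n-1})$. Thus solutions of \cref{eq:euler} and minimizers of $J_n(\cdot;f^n,u_h^{n-1})$ on $U_h^n$ coincide.

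Second, I would combine this identity with the norm equivalence $J_n(\uu; 0, 0) \simeq \norm{\nabla u}{}^2 + \norm{\ssigma}{}^2 + \norm{\div\ssigma}{}^2$ recalled just above (with $\ts_n$-dependent constants) to obtain $U$-coercivity of $\blfTot_n$, which descends to any closed subspace $U_h^n$. Continuity of $\blfTot_n$ and of $F_n(\cdot; f^n, u_h^{n-1})$ follows termwise from Cauchy-Schwarz. Lax-Milgram then delivers a unique $\uu_h^n \in U_h^n$ solving \cref{eq:euler}. I do not anticipate any real obstacle: the statement is essentially the standard well-posedness of least-squares methods on a closed subspace, as flagged in the paragraph preceding the theorem. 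The only point worth underlining is that the coercivity constant degenerates as $\ts_n \to 0$; this is harmless for well-posedness but is precisely what makes the subsequent error analysis nontrivial and motivates the non-symmetric elliptic projection of \cref{sec:duality}.
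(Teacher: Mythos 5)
Your proposal is correct and matches the paper's (implicit) argument: the paper proves this theorem simply by pointing to the preceding discussion, namely the identity between $\blfTot_n(\uu,\uu)$ and $J_n(\uu;0,0)$, the norm equivalence from~\cite{CLMMcC1994}, and the standard least-squares/Lax--Milgram well-posedness on closed subspaces, which is exactly what you spell out. Your additional remark that the coercivity constant degenerates as $\ts_n\to 0$ is accurate and is indeed the reason the later error analysis requires the separate, $\ts$-uniform coercivity of Lemma~\ref{lem:blf}.
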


\subsection{Problem decoupling for convection-reaction free problems}\label{section:decoupling}
In the case of convection and reaction free problems, i.e., $(\bbeta,\gamma)=(0,0)$, 
one can show, see~\cite{BochevGunzburgerLSQ,SplitLSQ}, that the backward Euler least-squares method reduces to a simplified problem:
Suppose $(\bbeta,\gamma)=(0,0)$ and that $\Amat$ is the identity. The bilinear form then reads
\begin{align*}
  \blfTot_n(\uu,\vv) &= \frac1{\ts_n}\ip{u}v + \ip{u}{-\div\ttau} + \ip{-\div\ssigma}{v}
  + \ts_n \ip{-\div\ssigma}{-\div\ttau} + 
  \ip{\nabla u-\ssigma}{\nabla v-\ttau} \\
  &= \frac1{\ts_n}\ip{u}v + \ip{\nabla u}{\ttau} + \ip{\ssigma}{\nabla v} + \ts_n \ip{\div\ssigma}{\div\ttau} + 
  \ip{\nabla u-\ssigma}{\nabla v-\ttau} \\
  &= \frac1{\ts_n}\ip{u}v + \ts_n \ip{\div\ssigma}{\div\ttau} + \ip{\nabla u}{\nabla v} + \ip{\ssigma}{\ttau},
\end{align*}
where we have only used integration by parts.
Testing with $\vv_h = (v_h,0)$ in~\cref{eq:euler} leads to the variational problem
\begin{align*}
  \frac1{\ts_n}\ip{u_h^n}{v_h} + \ip{\nabla u_h^n}{\nabla v_h^n} = \frac1{\ts_n} \ip{u_h^{n-1}}{v_h} +
  \ip{f^n}{v_h},
\end{align*}
which is independent of $\ssigma_h^j$ for all $j=1,\dots,n$. Hence, this is the backward Euler method for the standard
Galerkin scheme, where optimal error estimates are known~\cite{thomee}.
Therefore, the least-squares problem simplifies to a well-known method.
However, the situation is completely different when $(\bbeta,\gamma)\neq (0,0)$ 
because the problem does not decouple anymore and therefore does not reduce to a simplified method.
We stress that our analysis below is in particular also valid for $(\bbeta,\gamma)\neq (0,0)$.

\subsection{Discrete spaces and approximation properties}\label{sec:discrete}
The main ideas of our analysis are independent of specific discrete spaces. However, for the analysis of
convergence orders we will fix the following discrete setting.
By $\TT$ we denote a simplicial mesh on $\Omega$. As usual, $h$ denotes the biggest element diameter in $\TT$.
We will only consider uniform sequences of meshes, and we assume that our meshes are shape-regular.
The discrete spaces which we will use are the space $\cS_0^{p+1}(\TT)$ of globally continuous,
$\TT$-piecewise polynomials of degree at most $p+1$ satisfying homogeneous boundary conditions,
and the Raviart-Thomas space $\RT^p(\TT)$ of order $p$. We will also need the
space $\PP^p(\TT)$ of piecewise polynomials of degree at most $p$.
For the remainder of this work we consider the following discrete subspace of $U$,
\begin{align*}
  U_h := U_{h,p} := \cS_0^{p+1}(\TT) \times \RT^p(\TT),
\end{align*}
We will need certain interpolation operators mapping into the spaces.
By $I^{\rm SZ}_{h,p}:H^1_0(\Omega)\rightarrow\cS^{p+1}_0(\TT)$ we denote
the Scott-Zhang interpolation operator from~\cite{SZ_90}, by $I^{\rm
RT}_{h,p}:\Hdivset\Omega\cap\HH^1(\Omega)\rightarrow \RT^p(\TT)$
the Raviart-Thomas interpolation operator~\cite{RT_77}, and by $\Pi_{h,p}:L^2(\Omega)\rightarrow \PP^p(\TT)$ the
$L^2(\Omega)$-orthogonal projection.
We keep in mind that there the commutativity property $\div I^{\rm RT}_{h,p}\ssigma = \Pi_{h,p}\div\ssigma$.

The mesh $\TT$ is called shape-regular if
\begin{align*}
  \max_{T\in\TT} \frac{\diam(T)^d}{|T|} \leq \kappa < \infty,
\end{align*}
where $|T|$ denotes the volume of an element.

We make use of the spaces
\begin{align*}
  C^1(\TT) &:= \set{v\in L^\infty(\Omega)}{v|_T \in C^1(\overline T) \text{ for all } T\in\TT}, \\
  H^{p+1}(\TT) &:= \set{v\in L^2(\Omega)}{v|_T \in H^{p+1}(T) \text{ for all } T\in\TT},
\end{align*}
where $H^{p+1}(T)$ denotes the Sobolev space on $T$.
Moreover, let $\pwnabla : H^1(\TT) \to L^2(\Omega)$ denote the elementwise gradient operator.

The following result is a simple consequence of the approximation
properties of the interpolation operators mentioned above, respectively the commutativity property.
\begin{proposition}\label{prop:approx}
  Let $\uu = (u,\ssigma) \in U$, $p\in\N_0$. Suppose that $u\in H^{p+2}(\Omega)\cap H_0^1(\Omega)$, $\ssigma\in
  \HH^{p+1}(\Omega)$, and $\div\ssigma\in H^{p+1}(\TT)$. Then,
  \begin{align*}
    \min_{\vv_h\in U_h} \norm{\uu-\vv_h}\ts \leq C_\mathrm{app} h^{p+1} (\norm{u}{H^{p+2}(\Omega)} +
    \norm{\ssigma}{\HH^{p+1}(\Omega)} + \ts^{1/2}\norm{\div\ssigma}{H^{p+1}(\TT)})
  \end{align*}
  The constant $C_\mathrm{app}>0$ only depends on shape-regularity of $\TT$ and $p\in \N_0$.
  \qed
\end{proposition}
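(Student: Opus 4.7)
The plan is to exhibit a concrete discrete candidate and then estimate each term in $\norm{\cdot}\ts$ separately using the standard approximation properties of the three interpolation operators introduced in \cref{sec:discrete}. Since $\ssigma \in \HH^{p+1}(\Omega) \subseteq \HH^1(\Omega)$, the Raviart--Thomas interpolant $I^{\rm RT}_{h,p}\ssigma$ is well defined, and I would take
\begin{align*}
  \vv_h := \bigl(I^{\rm SZ}_{h,p} u,\; I^{\rm RT}_{h,p}\ssigma\bigr) \in U_h.
\end{align*}
By definition of the norm, the left-hand side of the claim is bounded by the sum
\begin{align*}
  \norm{\nabla(u-I^{\rm SZ}_{h,p}u)}{}^2 + \norm{\ssigma-I^{\rm RT}_{h,p}\ssigma}{}^2 + \ts\,\norm{\div(\ssigma-I^{\rm RT}_{h,p}\ssigma)}{}^2,
\end{align*}
so it suffices to estimate the three contributions individually.

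For the first term, the standard Scott--Zhang estimate \cite{SZ_90} gives
$\norm{\nabla(u-I^{\rm SZ}_{h,p}u)}{} \lesssim h^{p+1}\norm{u}{H^{p+2}(\Omega)}$,
with the implied constant depending only on $p$ and on shape-regularity. For the second term, the elementwise Raviart--Thomas approximation estimate \cite{RT_77} yields
$\norm{\ssigma-I^{\rm RT}_{h,p}\ssigma}{} \lesssim h^{p+1}\norm{\ssigma}{\HH^{p+1}(\Omega)}$.

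The third term is where the commutativity property enters, and this is the key observation that justifies measuring $\div\ssigma$ only in the broken space $H^{p+1}(\TT)$. Using $\div I^{\rm RT}_{h,p}\ssigma = \Pi_{h,p}\div\ssigma$, we rewrite
\begin{align*}
  \div(\ssigma - I^{\rm RT}_{h,p}\ssigma) = (1-\Pi_{h,p})\div\ssigma,
\end{align*}
and by elementwise application of the standard $L^2$-projection error estimate this is bounded by
$\lesssim h^{p+1}\norm{\div\ssigma}{H^{p+1}(\TT)}$. Multiplying by $\ts^{1/2}$ gives the desired third contribution. Combining the three bounds and taking the square root produces the stated estimate with a constant depending only on $p$ and shape-regularity of $\TT$. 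No serious obstacle is expected; the only point to check is that the $\ts$-weight in the norm is carried through untouched so that the $\ts^{1/2}$ factor appears exactly in front of the divergence term, which matches the form of the claim.
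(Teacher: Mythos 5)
Your proposal is correct and is exactly the argument the paper has in mind: the paper states the result with no written proof, remarking only that it is a simple consequence of the approximation properties of $I^{\rm SZ}_{h,p}$ and $I^{\rm RT}_{h,p}$ together with the commutativity property $\div I^{\rm RT}_{h,p}\ssigma=\Pi_{h,p}\div\ssigma$, which is precisely the candidate and the three estimates you use. The bookkeeping of the $\ts^{1/2}$ weight is handled correctly as well.
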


\subsection{Regularity of solutions}
For some given $g\in L^2(\Omega)$ we consider the second order elliptic problem
\begin{align*}
\begin{split}
  -\div\Amat\nabla v -\bbeta\cdot\nabla v + \gamma v &= g, \\
  v|_\Gamma &= 0
\end{split}
\end{align*}
and the equivalent first-order problem
\begin{subequations}
\begin{align*}
  -\div\ttau - \bbeta\cdot\nabla v + \gamma v &= g, \\
  \Amat\nabla v - \ttau &= 0, \\
  v|_\Gamma &= 0.
\end{align*}
\end{subequations}
Moreover, we consider the (dual or adjoint) problem
\begin{align*}
  -\div(\Amat\nabla w - \bbeta w) + \gamma w &= g, \\
  w|_\Gamma &= 0.
\end{align*}
Throughout, we assume that $\bbeta$ and $\gamma$ satisfy
\begin{subequations}\label{eq:regularity}
\begin{align}\label{eq:regularity:b}
  \bbeta\in C^1(\TT)^d \cap \Hdivset\Omega, \quad \gamma\in C^1(\TT).
\end{align}
For the results concerning optimal rates of the $L^2$ error we additionally assume that $\Omega$ and the coefficients
are such that (for the solutions above)
\begin{align}\label{eq:regularity:a}
  \norm{v}{H^2(\Omega)} + \norm{\ttau}{\HH^1(\Omega)} + \norm{w}{H^2(\Omega)} \lesssim \norm{g}{}.
\end{align}
\end{subequations}

\begin{remark}
  Estimate~\cref{eq:regularity:a} is satisfied for $d=2$ if $\Omega$ is convex, $\Amat$ the identity, and
  $\bbeta,\gamma$ satisfy~\cref{eq:regularity:b},
  see~\cite{grisvard} for details on the regularity of solutions on polygonal domains.
\end{remark}

\subsection{Elliptic projection and duality arguments}\label{sec:duality}
In this section we introduce and analyse the so-called \emph{elliptic projection operator} $\ep_h : U\to
U_h$, which will be used in the remainder of this work.
As discussed in the introduction, the elliptic part of the problem is described by a non-symmetric bilinear form
given by
\begin{align*}
  \blfns_n(\uu,\vv) &:= \ip{-\div\ssigma-\bbeta\cdot\nabla u + \gamma u}{v}
  \\ &\qquad+ \ts_n \ip{-\div\ssigma-\bbeta\cdot\nabla u + \gamma u}{-\div\ttau-\bbeta\cdot\nabla v+\gamma v}
  \\ &\qquad + \ip{\Amat^{1/2}\nabla u-\Amat^{-1/2}\ssigma}{\Amat^{1/2}\nabla v-\Amat^{-1/2}\ttau}
\end{align*}
for all $\uu,\vv\in U$.

We analyse basic properties of $\blf$ and $\blfns$.
For the remainder of this section we set $\ts:=\ts_n$ and skip the index $n$ in the bilinear forms.
\begin{lemma}\label{lem:blf}
  There exist constants $c_\blf,C_\blf>0$ independent of $\ts\in(0,T]$ such that
  \begin{align}
    c_\blf\norm{\uu}\ts^2 &\leq \min\{ \blf(\uu,\uu),\blfns(\uu,\uu) \}, \label{eq:blf:coer}\\
    \max\{ |\blf(\uu,\vv)|,|\blfns(\uu,\vv)|\} &\leq C_\blf \norm{\uu}\ts\norm{\vv}\ts \label{eq:blf:bound}
  \end{align}
  for all $\uu,\vv\in U$.
\end{lemma}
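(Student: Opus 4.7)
The plan is to prove coercivity by evaluating $\blf(\uu,\uu)$ (and analogously $\blfns(\uu,\uu)$) directly. Integration by parts on $\ip{u}{-\div\ssigma}$ (legitimate since $u\in H_0^1(\Omega)$) yields $\ip{\nabla u}{\ssigma}$, the identity $2\ip{u}{-\bbeta\cdot\nabla u}=\ip{\div\bbeta}{u^2}$ handles the convective part, and expanding the squared $\Amat$-weighted term produces cancellations. I expect to arrive at
\begin{align*}
\blf(\uu,\uu) = \ip{\Amat\nabla u}{\nabla u} + \ip{\Amat^{-1}\ssigma}{\ssigma} + \ip{\div\bbeta+2\gamma}{u^2} + \ts\,\nnorm{-\div\ssigma-\bbeta\cdot\nabla u+\gamma u}^2,
\end{align*}
in which every summand is non-negative thanks to (\ref{eq:coeff}) and $\lambda_{\min}(\Amat)\geq\alpha_0$. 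The first two contributions already control $\norm{\nabla u}{}^2+\norm{\ssigma}{}^2$. To extract the missing $\ts\norm{\div\ssigma}{}^2$ from the residual term, I would use the bound $\norm{\div\ssigma}{}^2\leq 2\nnorm{\mathrm{res}}^2 + 2\norm{\bbeta\cdot\nabla u-\gamma u}{}^2$, applied only to a small fraction $\theta\in(0,1)$ of $\ts\nnorm{\mathrm{res}}^2$. The resulting deficit on the $\norm{\nabla u}{}^2$ side is at most $2\theta\ts(\norm{\bbeta}{\infty}^2+\CF^2\norm{\gamma}{\infty}^2)\norm{\nabla u}{}^2$ after invoking Poincar\'e on $\norm{u}{}$, and since $\ts\leq T$ this is absorbed into $\alpha_0\norm{\nabla u}{}^2$ by choosing $\theta$ in terms of the fixed data only. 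This is the step that keeps $c_\blf$ uniform in $\ts$.

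For $\blfns$, dropping the symmetric term $\ip{u}{-\div\ttau-\bbeta\cdot\nabla v+\gamma v}$ halves the contributions from $2\ip{\nabla u}{\ssigma}$ and from $\ip{\div\bbeta+2\gamma}{u^2}$ in the identity above, so that a stray cross term $-\ip{\nabla u}{\ssigma}$ survives with a negative sign. I would control it by the weighted Cauchy--Schwarz inequality
\begin{align*}
\abs{\ip{\nabla u}{\ssigma}} = \abs{\ip{\Amat^{1/2}\nabla u}{\Amat^{-1/2}\ssigma}} \leq \tfrac12\ip{\Amat\nabla u}{\nabla u} + \tfrac12\ip{\Amat^{-1}\ssigma}{\ssigma},
\end{align*}
which still leaves half of each $\Amat$-weighted quadratic form intact; the rest of the argument is identical to that for $\blf$.

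Boundedness (\ref{eq:blf:bound}) follows from Cauchy--Schwarz once the mixed term $\ip{u}{-\div\ttau-\bbeta\cdot\nabla v+\gamma v}$ is integrated by parts into $\ip{\nabla u}{\ttau}+\ip{u}{-\bbeta\cdot\nabla v+\gamma v}$, after which the $L^2$-norms of $u$ and $v$ are absorbed by Poincar\'e. The $\ts$-weighted residual product is handled via $\ts^{1/2}\nnorm{-\div\ssigma-\bbeta\cdot\nabla u+\gamma u}\lesssim\norm{\uu}\ts$, with constants depending only on $T,\norm{\bbeta}{\infty},\norm{\gamma}{\infty},\CF$, and the $\Amat$-weighted term is controlled by ellipticity. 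The only delicate point throughout is uniformity in $\ts\in(0,T]$: every $\ts$-dependent deficit arising in the coercivity lower bound must be absorbed by $\ts$-independent quantities, and this is precisely what Poincar\'e together with $\ts\leq T$ accomplishes.
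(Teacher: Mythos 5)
Your proposal is correct and follows essentially the same route as the paper: integration by parts to expose the cancellation of $\ip{\nabla u}{\ssigma}$ against the expansion of the $\Amat$-weighted square, the sign condition \cref{eq:coeff} for the convection--reaction contribution, and absorption of the $\ts$-dependent deficits via $\ts\leq T$ and Friedrichs' inequality. The only (harmless) deviation is that you treat the surviving cross term in $\blfns(\uu,\uu)$ directly by weighted Cauchy--Schwarz, whereas the paper deduces coercivity of $\blfns$ from that of $\blf$ via the one-line observation $\blfns(\uu,\uu)\geq\tfrac12\blf(\uu,\uu)$.
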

\begin{proof}
  We start to proof boundedness~\cref{eq:blf:bound} of $\blfns$.
  The Cauchy-Schwarz inequality, the triangle inequality and Friedrich's inequality show that
  \begin{align*}
    &\ts |\ip{-\div\ssigma-\bbeta\cdot\nabla u + \gamma u}{-\div\ttau-\bbeta\cdot\nabla v+\gamma v}|
    \\&\qquad \leq \ts^{1/2} \norm{-\div\ssigma-\bbeta\cdot\nabla u + \gamma u}{} 
    \ts^{1/2} \norm{-\div\ttau-\bbeta\cdot\nabla v+\gamma v}{} \\
    &\qquad \lesssim (\ts^{1/2} \norm{\div\ssigma}{} + \norm{\nabla u}{} + \norm{u}{}) 
    (\ts^{1/2} \norm{\div\ttau}{} + \norm{\nabla v}{} + \norm{v}{})
     \lesssim \norm{\uu}\ts\norm{\vv}\ts .
  \end{align*}
  Moreover, 
  \begin{align*}
    |\ip{\Amat^{1/2}\nabla u-\Amat^{-1/2}\ssigma}{\Amat^{1/2}\nabla v-\Amat^{-1/2}\ttau}|
    &\lesssim (\norm{\nabla u}{}+\norm{\ssigma}{})(\norm{\nabla v}{}+\norm{\ttau}{})
    \lesssim \norm{\uu}\ts\norm{\vv}\ts.
  \end{align*}
  Integration by parts yields
  \begin{align*}
    |\ip{-\div\ssigma-\bbeta\cdot\nabla u + \gamma u}{v}| &= |\ip{\ssigma}{\nabla v} - \ip{\bbeta\cdot\nabla u}v
    +\ip{\gamma u}v| \lesssim \norm{\uu}\ts\norm{\vv}\ts.
  \end{align*}
  Putting altogether shows boundedness of $\blfns$. The same arguments prove boundedness of $\blf$ and
  therefore~\cref{eq:blf:bound}. It remains to show~\cref{eq:blf:coer}.
  First, from~\cref{eq:coeff} it follows with integration by parts that
  \begin{align*}
    \ip{u}{-\bbeta\cdot\nabla u+\gamma u} = \ip{u}{(\tfrac12\div\bbeta+\gamma)u}\geq 0.
  \end{align*}
  This implies that
  \begin{align*}
    \ip{u}{-\div\ssigma-\bbeta\cdot\nabla u+\gamma u} = \ip{\nabla u}{\ssigma} + 
    \ip{u}{-\bbeta\cdot\nabla u+\gamma u}\geq \ip{\nabla u}{\ssigma}.
  \end{align*}
  Therefore,
  \begin{align*}
    &2\ip{u}{-\div\ssigma-\bbeta\cdot\nabla u+\gamma u} + \norm{\Amat^{1/2}\nabla u-\Amat^{-1/2}\ssigma}{}^2
    \\&\qquad\geq 2\ip{\nabla u}{\ssigma} + \norm{\Amat^{1/2}\nabla u}{}^2 + \norm{\Amat^{-1/2}\ssigma}{}^2
    - 2\ip{\nabla u}{\ssigma}
     = \norm{\Amat^{1/2}\nabla u}{}^2 + \norm{\Amat^{-1/2}\ssigma}{}^2.
  \end{align*}
  Together this yields
  \begin{align*}
    \norm{\uu}\ts^2 &= \norm{\nabla u}{}^2 + \norm{\ssigma}{}^2 + \ts\norm{\div\ssigma}{}^2
    \lesssim \norm{\nabla u}{}^2 + \norm{\ssigma}{}^2 + \ts\norm{-\div\ssigma-\bbeta\cdot\nabla u +\gamma u}{}^2  \\
    &\lesssim \norm{\Amat^{1/2}\nabla u}{}^2 + \norm{\Amat^{-1/2}\ssigma}{}^2 + \ts\norm{-\div\ssigma-\bbeta\cdot\nabla u +\gamma u}{}^2\\
    &\leq 2\ip{u}{-\div\ssigma-\bbeta\cdot\nabla u+\gamma u} + \norm{\Amat^{1/2}\nabla u-\Amat^{-1/2}\ssigma}{}^2 
    + \ts \norm{-\div\ssigma-\bbeta\cdot\nabla u+\gamma u}{}^2 = \blf(\uu,\uu).
  \end{align*}
  Coercivity of $\blfns$ follows from that of $\blf$, since
  \begin{align*}
    \blfns(\uu,\uu) \geq \tfrac12 \blf(\uu,\uu).
  \end{align*}
  This finishes the proof.
\end{proof}

For given $\uu=(u,\ssigma)\in U$ define the \emph{elliptic projector} 
$\ep_h\uu = (\ep_h^\nabla \uu,\ep_h^{\div}\uu)\in U_h$ by
\begin{align}\label{eq:def:ep}
  \blfns(\ep_h\uu,\vv_h) = \blfns(\uu,\vv_h) \quad\text{for all } \vv_h\in U_h.
\end{align}

\begin{theorem}\label{thm:ep}
  If $\uu\in U$, then $\ep_h\uu$ is well-defined.

  In particular, it holds that
  \begin{align}\label{eq:ep:opt}
    \norm{\uu-\ep_h\uu}\ts \leq C_\mathrm{opt} \inf_{\vv_h\in U_h} \norm{\uu-\vv_h}\ts.
  \end{align}
  The constant $C_\mathrm{opt}>0$ only depends on $c_\blf,C_\blf>0$.

  Moreover, if~\cref{eq:regularity} holds, then
  \begin{align}\label{eq:ep:duality}
    \norm{u-\ep_h^\nabla \uu}{} \leq C_{L^2} h \norm{\uu-\ep_h\uu}\ts.
  \end{align}
  The constant $C_{L^2}>0$ only depends on $c_\blf,C_\blf>0$, shape-regularity of $\TT$, the constant 
  in~\cref{eq:regularity}, $\bbeta,\gamma$, $T$, and $\Omega$.
\end{theorem}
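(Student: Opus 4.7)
For the well-posedness of $\ep_h\uu$ and the quasi-optimal bound~\cref{eq:ep:opt}, I would apply the Lax--Milgram lemma on the closed subspace $U_h \subseteq U$, using the $\ts$-uniform coercivity and boundedness of $\blfns$ given by~\cref{lem:blf}. The C\'ea-type estimate is then standard: for any $\vv_h \in U_h$, Galerkin orthogonality combined with coercivity and boundedness of $\blfns$ yields
\begin{align*}
c_\blf \norm{\ep_h\uu - \vv_h}\ts^2 &\leq \blfns(\ep_h\uu-\vv_h,\ep_h\uu-\vv_h) = \blfns(\uu-\vv_h,\ep_h\uu-\vv_h) \\
&\leq C_\blf \norm{\uu-\vv_h}\ts\,\norm{\ep_h\uu-\vv_h}\ts,
\end{align*}
so that $C_\mathrm{opt} = 1 + C_\blf/c_\blf$ after a triangle inequality and passing to the infimum.

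For the $L^2$-bound~\cref{eq:ep:duality} I would employ an Aubin--Nitsche duality argument. Set $e^\nabla := u - \ep_h^\nabla\uu$ and define the dual element $\ww = (w,\mathbf{r}) \in U$ by $\blfns(\vv,\ww) = \ip{v}{e^\nabla}$ for all $\vv = (v,\ttau) \in U$, which is well-posed by~\cref{lem:blf} applied to the adjoint form. Testing separately with $\vv = (0,\ttau)$ and $\vv = (v,0)$, and introducing the auxiliary quantities $q := -\div\mathbf{r} - \bbeta\cdot\nabla w + \gamma w$ and $\tilde w := w + \ts q$, I expect the strong form of the dual problem to reduce to (i) the pointwise identity $\mathbf{r} = -\ts\Amat\nabla q$, and (ii) the fact that $\tilde w\in H_0^1(\Omega)$ solves the adjoint PDE $-\div(\Amat\nabla\tilde w - \bbeta\tilde w) + \gamma\tilde w = e^\nabla$ appearing in~\cref{eq:regularity}. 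The regularity hypothesis~\cref{eq:regularity:a} then gives $\norm{\tilde w}{H^2(\Omega)} \lesssim \norm{e^\nabla}{}$, and the remaining variables are controlled through the reaction-dominant, $\ts$-dependent elliptic PDE that the relation between $q$ and $\tilde w$ implies.

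Galerkin orthogonality for $\blfns$ combined with~\cref{lem:blf} yields
\[
\norm{e^\nabla}{}^2 = \blfns(\uu-\ep_h\uu,\ww-\ww_h) \leq C_\blf\norm{\uu-\ep_h\uu}\ts\,\norm{\ww-\ww_h}\ts
\]
for any $\ww_h \in U_h$, so the proof reduces to the $\ts$-uniform estimate $\inf_{\ww_h \in U_h}\norm{\ww-\ww_h}\ts \lesssim h\norm{e^\nabla}{}$ using the canonical interpolants of~\cref{sec:discrete}. This last step is what I expect to be the main obstacle: the summand $\ts^{1/2}\norm{\div(\mathbf{r}-I^\mathrm{RT}_{h,p}\mathbf{r})}{}$ is delicate because $\mathbf{r} = -\ts\Amat\nabla q$ makes $\div\mathbf{r}$ itself scale like $\ts$, and a naive use of $L^2$-regularity of $q$ produces $\ts$-powers that fail to combine with the $\ts^{1/2}$ prefactor to give $h$. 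The resolution is to exploit the commutativity $\div\, I^\mathrm{RT}_{h,p} = \Pi_{h,p}\div$ and rewrite $\div\mathbf{r}$ by substituting the PDE for $q$, so that the leading contribution is expressed through $\tilde w$ (which is $H^2$-regular and thus $h$-approximable in $L^2$ by $\Pi_{h,p}$) plus genuinely $\ts$-suppressed corrections in $q$. Dividing the resulting bound by $\norm{e^\nabla}{}$ then produces~\cref{eq:ep:duality} with $C_{L^2}$ depending only on the stated quantities.
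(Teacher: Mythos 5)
Your proposal is correct and follows essentially the same route as the paper: Lax--Milgram plus a C\'ea estimate for \cref{eq:ep:opt}, and for \cref{eq:ep:duality} a duality argument whose dual system is exactly the one the paper constructs (your $\tilde w$, $w$, $\mathbf{r}$ correspond to the paper's $w$, $v$, $\ttau$), including the singular-perturbation splitting that yields the $\ts$-uniform $H^2$ bounds and the Raviart--Thomas commutativity trick for the term $\ts^{1/2}\norm{\div(\mathbf{r}-I^{\rm RT}_{h,p}\mathbf{r})}{}$. The only organizational difference is that you define the dual pair variationally and then identify its strong form, whereas the paper constructs $(v,\ttau)$ directly from the adjoint PDE and verifies the identity $\norm{u_d}{}^2=\blfns(\uu_d,\vv)$ by hand, which sidesteps the (minor) justification that your $\tilde w=w+\ts q$ indeed lies in $H^1_0(\Omega)$.
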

\begin{proof}
  Note that~\cref{lem:blf} together with the Lax-Milgram theory show that $\ep_h\uu$ is well-defined and that it
  holds~\cref{eq:ep:opt}.

  To prove~\cref{eq:ep:duality} we need to develop some duality arguments. For optimal $L^2$ error estimates for
  least-squares finite element methods we refer to~\cite{LSQduality}.
  Note that in our case the bilinear form $\blfns$ is not symmetric and thus does not directly
  correspond to a least-squares method.
  We divide the proof into several steps.
  Throughout we use $\uu_d := (u_d,\ssigma_d) := \uu-\ep_h \uu$.

  \noindent
  \textbf{Step 1.}
  Define the function $w\in H_0^1(\Omega)$ through the unique solution of the PDE
  \begin{align*}
    -\div(\Amat\nabla w -\bbeta w) + \gamma w = u_d.
  \end{align*}
  By assumption~\cref{eq:regularity:a} there holds $w\in H^2(\Omega)$ and
  \begin{align}\label{eq:reg}
    \norm{w}{H^{2}(\Omega)} \lesssim \norm{u_d}{}.
  \end{align}
  In particular, we have that
  \begin{align}\label{eq:dual}
    \begin{split}
      \norm{u_d}{}^2 &= \ip{u_d}{-\div(\Amat\nabla w -\bbeta w) + \gamma w} = \ip{\nabla u_d}{\Amat\nabla w-\bbeta w} +
      \ip{u_d}{\gamma w} \\
      &= \ip{\Amat^{1/2}\nabla u_d}{\Amat^{1/2}\nabla w} + \ip{-\bbeta\cdot\nabla u_d+\gamma u_d}{w}\\
      &= \ip{\Amat^{1/2}\nabla u_d-\Amat^{-1/2}\ssigma_d}{\Amat^{1/2}\nabla w} + \ip{-\div\ssigma_d-\bbeta\cdot\nabla u_d+\gamma u_d}{w},
    \end{split}
  \end{align}
  where we used integration by parts and $0=\ip{-\Amat^{-1/2}\ssigma}{\Amat^{1/2}\nabla w} - \ip{\div\ssigma}w$ in the last step.

  \noindent
  \textbf{Step 2.} We will construct a function $\vv = (v,\ttau)\in U$ such that
  \begin{align}\label{eq:dual:identity}
    \norm{u_d}{}^2 = \blfns(\uu_d,\vv).
  \end{align}
  To that end, let $v\in H_0^1(\Omega)$ be the unique solution of
  \begin{align}\label{eq:dual:v}
    -\div\Amat\nabla v - \bbeta\cdot\nabla v + \gamma v + \frac1\ts v = -\div\Amat\nabla w + \frac1\ts w.
  \end{align}
  From the first step and~\cref{eq:regularity:b} we conclude that the right-hand side is in $L^2(\Omega)$.
  Then, by assumption~\cref{eq:regularity:a} there holds $v\in H^2(\Omega)$. If we define $\ttau$ by
  \begin{subequations}\label{eq:dual:fo}
  \begin{align}\label{eq:dual:fo:1}
    \Amat^{1/2}\nabla v-\Amat^{-1/2}\ttau &= \Amat^{1/2}\nabla w,
  \end{align}
  then it follows from~\cref{eq:dual:v} that
  \begin{align}
    -\div\ttau - \bbeta\cdot\nabla v + \gamma v + \frac1\ts v &= \frac1\ts w,
  \end{align}
  \end{subequations}
  and hence $\ttau\in\Hdivset\Omega$. Using~\cref{eq:dual:fo} in~\cref{eq:dual} finally shows~\cref{eq:dual:identity}.

  \noindent
  \textbf{Step 3.}
  We will show that
  \begin{align*}
    \norm{v}{H^2(\Omega)} + \norm{\ttau}{\HH^1(\Omega)} \lesssim \norm{u_d},
  \end{align*}
  Note that because $\ts$ can be arbitrary small, we can not extract such an $\ts$-independent 
  estimate directly from~\cref{eq:dual:v}.
  We therefore make the ansatz $v = \widetilde v + w$ with $\widetilde v\in H_0^1(\Omega)$.
  From~\cref{eq:dual:v} it follows that
  \begin{align*}
    -\div\Amat\nabla \widetilde v - \bbeta\cdot\nabla \widetilde v + \gamma \widetilde v + \frac1\ts \widetilde v
    - \bbeta\cdot\nabla w + \gamma w = 0.
  \end{align*}
  Multiplying by $\ts$ and simplifying yields
  \begin{align}\label{eq:dual:vTilde}
    \ts(-\div\Amat\nabla \widetilde v - \bbeta\cdot\nabla \widetilde v + \gamma \widetilde v) + \widetilde v
    = \ts(\bbeta\cdot\nabla w - \gamma w).
  \end{align}
  Testing with $\widetilde v$, using that $\ip{-\bbeta\cdot\nabla \widetilde v+\gamma \widetilde v}{\widetilde v}\geq 0$,
  and employing the bound~\cref{eq:reg}, we infer
  \begin{align*}
    \ts \norm{\nabla\widetilde v}{}^2 + \norm{\widetilde v}{}^2 
    \lesssim \ts\norm{\bbeta\cdot\nabla w - \gamma w}{}\norm{\widetilde v}{} \leq \ts\norm{u_d}{}
    (\norm{\widetilde v}{} + \ts^{1/2}\norm{\nabla \widetilde v}{}).
  \end{align*}
  This shows that
  \begin{align}\label{eq:bound:ts}
    \norm{\nabla \widetilde v}{} \leq \ts^{1/2}\norm{u_d}{} \quad\text{and}\quad \norm{\widetilde v}{} \lesssim \ts
    \norm{u_d}{}.
  \end{align}
  With $v=w+\widetilde v$ we rewrite the first-order system~\cref{eq:dual:fo} as
  \begin{align*}
    \Amat^{1/2}\nabla\widetilde v - \Amat^{-1/2}\ttau &= 0, \\
    -\div\ttau-\bbeta\cdot\nabla\widetilde v + \gamma \widetilde v 
    &= \bbeta\cdot\nabla w - \gamma w - \frac1\ts \widetilde v.
  \end{align*}
  By our regularity assumptions~\cref{eq:regularity} and the bounds~\cref{eq:reg} and~\cref{eq:bound:ts} we conclude
  \begin{align*}
    \norm{\ttau}{\HH^1(\Omega)} + \norm{\widetilde v}{H^2(\Omega)} \lesssim \norm{-\bbeta\cdot\nabla w + \gamma w -
    \frac1\ts \widetilde v}{} \lesssim \norm{u_d}{}.
  \end{align*}
  With the triangle inequality and~\cref{eq:reg} we conclude this step.

  \noindent
  \textbf{Step 4.} We prove that
  \begin{align*}
    \ts^{1/2}\norm{\div(1-I^{\rm RT}_{h,0})\ttau}{} \lesssim h \norm{u_d}{}.
  \end{align*}
  Recall from Step~3 that $\div\ttau = -\bbeta\cdot\nabla v+\gamma v + \frac1\ts \widetilde v$.
  By assumption~\cref{eq:regularity:b} we have $\bbeta\in C^1(\TT)^d$, and $\gamma\in C^1(\TT)$. Therefore, $\div\ttau \in
  H^1(\TT)$. Using the commutativity property of $I^{\rm RT}_{h,0}$ we infer
  \begin{align*}
    \ts^{1/2}\norm{\div(1-I^{\rm RT}_{h,0})\ttau}{} &= \ts^{1/2}\norm{(1-\Pi_{h,p})\div\ttau}{} 
    \lesssim \ts^{1/2} h \norm{\pwnabla \div\ttau}{} \\
    &\lesssim \ts^{1/2} h (\norm{v}{H^2(\Omega)} + \frac1\ts\norm{\nabla \widetilde v}{}).
  \end{align*}
  From Step~3 we know that $\norm{v}{H^2(\Omega)}\lesssim \norm{u_d}{}$ and 
  $\norm{\nabla\widetilde v}{}\lesssim \ts^{1/2}\norm{u_d}{}$. Hence,
  \begin{align*}
    \ts^{1/2}\norm{\div(1-I^{\rm RT}_{h,0})\ttau}{} 
    \lesssim h \norm{u_d}{}.
  \end{align*}

  \noindent
  \textbf{Step 5.}
  From the definition~\cref{eq:def:ep} of $\ep_h$,
  $\uu_d = \uu-\ep_h\uu$, and boundedness of $\blfns$, it follows that
  \begin{align*}
    \norm{u-\ep_h^\nabla \uu}{}^2 = \blfns(\uu-\ep_h\uu,\vv) = \blfns(\uu-\ep_h\uu,\vv-\vv_h)
    \lesssim \norm{\uu-\ep_h\uu}\ts \norm{\vv-\vv_h}\ts
  \end{align*}
  for any $\vv_h\in U_h$. We choose $\vv_h = (I^{\rm SZ}_{h,1} v,I^{\rm RT}_{h,0}\ttau)$. 
  It remains to show $\norm{\vv-\vv_h}\ts \lesssim h \norm{u-\ep_h^\nabla \uu}{}$ to finish the proof.
  With the regularity estimates from Step~3 and the approximation properties of the operators from~\cref{sec:discrete} we infer
  \begin{align*}
    \norm{\nabla(v-v_h)}{} + \norm{\ttau-\ttau_h}{} \lesssim h (\norm{v}{H^2(\Omega)} + \norm{\ttau}{\HH^1(\Omega)}) 
    \lesssim h\norm{u-\ep_h^\nabla\uu}{}.
  \end{align*}
  Then, together with the estimate from Step~4 this shows
  \begin{align*}
    \norm{\vv-\vv_h}\ts \lesssim h \norm{u-\ep_h^\nabla \uu}{},
  \end{align*}
  which finishes the proof.
\end{proof}

The following is a simple consequence of~\cref{thm:ep,prop:approx}.
\begin{corollary}\label{cor:ep:rates}
  Suppose $u\in H^{p+2}(\Omega)\cap H_0^1(\Omega)$, $\ssigma\in \HH^{p+1}(\Omega)$ such that $\div\ssigma\in
  H^{p+1}(\TT)$ and define $\uu := (u,\ssigma) \in U$. Then,
  \begin{align*}
    \norm{\uu-\ep_h\uu}\ts \leq C h^{p+1}(\norm{u}{H^{p+2}(\Omega)} + \norm{\ssigma}{\HH^{p+1}(\Omega)} +
    \ts^{1/2}\norm{\div\ssigma}{H^{p+1}(\TT)} )
  \end{align*}
  If additionally~\cref{eq:regularity} holds true, then
  \begin{align*}
    \norm{u-\ep_h^\nabla \uu}{} \leq C h^{p+2} (\norm{u}{H^{p+2}(\Omega)} + \norm{\ssigma}{\HH^{p+1}(\Omega)} +
    \ts^{1/2}\norm{\div\ssigma}{H^{p+1}(\TT)} ).
  \end{align*}
  The constant $C>0$ depends only on the constants from~\cref{thm:ep} and~\cref{prop:approx}.
  \qed
\end{corollary}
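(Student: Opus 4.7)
The statement is essentially a direct consequence of chaining Theorem~\ref{thm:ep} with Proposition~\ref{prop:approx}, so the plan is very short and there is no real obstacle to overcome; the work has been done in proving those two results. I would present the argument in two parts.

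For the first estimate, the plan is to invoke the quasi-optimality bound~\cref{eq:ep:opt} from Theorem~\ref{thm:ep}, which gives
\begin{align*}
  \norm{\uu-\ep_h\uu}\ts \leq C_\mathrm{opt} \inf_{\vv_h\in U_h}\norm{\uu-\vv_h}\ts,
\end{align*}
and then bound the infimum on the right-hand side using Proposition~\ref{prop:approx}, whose hypotheses ($u\in H^{p+2}(\Omega)\cap H_0^1(\Omega)$, $\ssigma\in\HH^{p+1}(\Omega)$, $\div\ssigma\in H^{p+1}(\TT)$) coincide exactly with those assumed here. This yields the claimed bound with constant $C = C_\mathrm{opt}\cdot C_\mathrm{app}$.

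For the second estimate, the plan is to apply the duality estimate~\cref{eq:ep:duality} from Theorem~\ref{thm:ep}, which under the additional regularity assumption~\cref{eq:regularity} gives
\begin{align*}
  \norm{u-\ep_h^\nabla \uu}{} \leq C_{L^2}\, h\, \norm{\uu-\ep_h\uu}\ts,
\end{align*}
and then substitute the $h^{p+1}$ bound from the first part to pick up the extra power of $h$, producing the claimed $h^{p+2}$ rate with constant $C = C_{L^2}\cdot C_\mathrm{opt}\cdot C_\mathrm{app}$. No further computation is needed, and the constant dependence matches the statement.
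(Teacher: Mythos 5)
Your argument is exactly the one the paper intends: the corollary is stated as a direct consequence of \cref{thm:ep} and \cref{prop:approx}, obtained by chaining the quasi-optimality bound \cref{eq:ep:opt} with the approximation estimate for the first claim, and then inserting that into the duality bound \cref{eq:ep:duality} for the second. The proof is correct and the constant dependence matches.
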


\section{A priori analysis}\label{sec:apriori}
\subsection{Stability of discrete solutions}
Our first result treats stability of discrete solutions of~\cref{eq:euler}.
\begin{theorem}\label{thm:stability:euler}
  Let $f\in C^0([0,T];L^2(\Omega))$ and let $u_h^0\in L^2(\Omega)$ be given.
  Let $\uu_h^n=(u_h^n,\ssigma_h^n)\in U_h^n$, $n=1,\dots,N$, denote the solutions of~\cref{eq:euler}.
  The $n$-th iteration satisfies
  \begin{align*}
    \norm{u_h^n}{} \leq \sum_{j=1}^n \ts_j \norm{f^j}{} + \norm{u_h^0}{}.
  \end{align*}
\end{theorem}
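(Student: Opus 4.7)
The plan is to test~\cref{eq:euler} with $\vv_h=\uu_h^n$ and then iterate. Introduce the shorthands
\begin{align*}
r_h^n := -\div\ssigma_h^n-\bbeta\cdot\nabla u_h^n+\gamma u_h^n, \quad s_h^n := u_h^n + \ts_n r_h^n, \quad M^n := u_h^{n-1} + \ts_n f^n.
\end{align*}
A direct inspection of the definition of $F_n$ shows $\ts_n F_n(\uu_h^n;f^n,u_h^{n-1}) = \ip{M^n}{s_h^n}$, so multiplying the variational identity $\blfTot_n(\uu_h^n,\uu_h^n)=F_n(\uu_h^n;f^n,u_h^{n-1})$ by $\ts_n$ yields
\begin{align*}
\norm{u_h^n}{}^2 + \ts_n\,\blf_n(\uu_h^n,\uu_h^n) = \ip{M^n}{s_h^n}.
\end{align*}

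The core of the plan is to read this left-hand side in two complementary ways. First, expanding $\norm{s_h^n}{}^2 = \norm{u_h^n}{}^2+2\ts_n\ip{u_h^n}{r_h^n}+\ts_n^2\norm{r_h^n}{}^2$ and comparing with the evaluation $\blf_n(\uu_h^n,\uu_h^n) = 2\ip{u_h^n}{r_h^n}+\ts_n\norm{r_h^n}{}^2 + \norm{\Amat^{1/2}\nabla u_h^n-\Amat^{-1/2}\ssigma_h^n}{}^2$, I obtain the purely algebraic rewrite
\begin{align*}
\norm{u_h^n}{}^2 + \ts_n\,\blf_n(\uu_h^n,\uu_h^n) = \norm{s_h^n}{}^2 + \ts_n \norm{\Amat^{1/2}\nabla u_h^n-\Amat^{-1/2}\ssigma_h^n}{}^2.
\end{align*}
Combined with the previous displayed identity and Cauchy--Schwarz, this already implies $\norm{s_h^n}{}^2\leq\ip{M^n}{s_h^n}\leq \norm{M^n}{}\norm{s_h^n}{}$, and hence $\norm{s_h^n}{}\leq \norm{M^n}{}$.

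The second reading uses the non-negativity $\blf_n(\uu_h^n,\uu_h^n)\geq 0$, which is precisely what was verified inside the proof of~\cref{lem:blf} as a consequence of~\cref{eq:coeff}. Discarding this non-negative term in the first identity turns it into $\norm{u_h^n}{}^2 \leq \ip{M^n}{s_h^n} \leq \norm{M^n}{}\norm{s_h^n}{}\leq \norm{M^n}{}^2$, so that $\norm{u_h^n}{}\leq \norm{M^n}{}\leq \norm{u_h^{n-1}}{}+\ts_n\norm{f^n}{}$. A straightforward induction on $n$ delivers the claim.

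The only (mild) obstacle is that testing with $\vv_h=\uu_h^n$ does not directly produce $\norm{u_h^n}{}$ on the right-hand side but the residual-type combination $s_h^n=u_h^n+\ts_n r_h^n$. The trick is to observe that the least-squares structure of $\blf_n$ already contains $\norm{s_h^n}{}^2$ up to the $\Amat$-constitutive residual, so $\norm{s_h^n}{}$ can be controlled independently by $\norm{M^n}{}$ and fed back into the Cauchy--Schwarz estimate without any loss of constants.
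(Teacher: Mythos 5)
Your proof is correct and follows essentially the same route as the paper: both test \cref{eq:euler} with $\vv_h=\uu_h^n$, exploit that $\ts_n\blfTot_n(\vv,\vv)$ dominates $\norm{v+\ts_n(-\div\ttau-\bbeta\cdot\nabla v+\gamma v)}{}^2$ (your identity with the extra constitutive-residual term makes explicit what the paper uses as an inequality), apply Cauchy--Schwarz to the right-hand side, discard the nonnegative term $\blf_n(\uu_h^n,\uu_h^n)$ via the coercivity of \cref{lem:blf}, and iterate. The only cosmetic difference is that you work with the $\ts_n$-scaled identity and the auxiliary quantities $s_h^n$, $M^n$ instead of the paper's $\ts_n^{\pm 1/2}$ bookkeeping.
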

\begin{proof}
  Apply the Cauchy-Schwarz estimate to get
  \begin{align*}
    |F_n(\vv;f^n,u_h^{n-1})| &\leq \ts_n^{1/2}\norm{\frac1{\ts_n}u_h^{n-1}+f^n}{} \ts_n^{1/2}\norm{\frac1{\ts_n}v_h
    -\div\ttau_h-\bbeta\cdot\nabla v_h +\gamma v_h}{} \\
    &\leq \ts_n^{1/2}\norm{\frac1{\ts_n}u_h^{n-1}+f^n}{} \, \blfTot_n(\vv_h,\vv_h)^{1/2}.
  \end{align*}
  Thus, the choice $\vv_h = \uu_h^n$ and~\cref{eq:euler} lead to
  \begin{align*}
    \blfTot_n(\uu_h^n,\uu_h^n)^{1/2} \leq \ts_n^{1/2} \norm{f^n}{} + \ts^{-1/2}\norm{u_h^{n-1}}.
  \end{align*}
  Multiplying by $\ts_n^{1/2}$ and using that $\blf(\cdot,\cdot)$ is coercive (see~\cref{lem:blf}) we infer
  \begin{align*}
    \norm{u_h^n}{} \leq \ts_n^{1/2}\blfTot_n(\uu_h^n,\uu_h^n)^{1/2} \leq \ts_n\norm{f^n}{} +
    \norm{u_h^{n-1}}{}.
  \end{align*}
  Iterating the same arguments yields the assertion.
\end{proof}

\subsection{Optimal $L^2$ estimates}\label{sec:apriori:L2}
Our second main results concerns optimal estimates in the $L^2(\Omega)$ norm.
We will assume that the initial data $u_h^0$ for problem~\cref{eq:euler} is chosen such that
\begin{align}\label{eq:choice:initDataL2}
  \norm{\ep_h^\nabla \uu^0-u_h^0}{} \leq C_0 h^{p+2},
\end{align}
where $C_0$ is some generic constant depending also on $\uu^0$
and $\ep_h=(\ep_h^\nabla,\ep_h^\div)$ is the elliptic projection operator from~\cref{sec:duality}.
For a simpler representation of the error estimates, we use the following norms in the remainder of this article:
\begin{align*}
  \enorm{(v,\ttau)}_p^2 := \norm{v}{H^{p+2}(\Omega)}^2 + \norm{\ttau}{\HH^{p+1}(\Omega)}^2 +
  \norm{\div\ttau}{H^{p+1}(\TT)}^2.
\end{align*}
Whenever such a term appears, we assume that 
\begin{align*}
  \sup_{t\in[0,T]} \enorm{(v,\ttau)}_p < \infty.
\end{align*}
This means that we implicitly assume some regularity of the function.
Similarly, when $\norm{\ddot{u}}{}$ appears we assume that $\sup_{t\in[0,T]} \norm{\ddot{u}}{} < \infty$.

\begin{theorem}\label{thm:euler:L2}
  Suppose $\ts_n = \ts>0$ for $j=1,\dots,N$ and let $\uu_h^n\in U_h^n=U_h$ denote the solution of~\cref{eq:euler}
  and let $\uu=(u,\ssigma)$ denote the solution of~\cref{eq:fo}.
  If~\cref{eq:regularity} and~\cref{eq:choice:initDataL2} hold true, then
  \begin{align*}
    \norm{u^n-u_h^n}{} \leq C(\uu)( h^{p+2} + \ts),
  \end{align*}
  where the constant $C(\uu) = C(\enorm{\uu}_p,\enorm{\dot\uu}_p,\norm{\ddot u}{},C_0)$ and $C_0$ is the constant
  from~\cref{eq:choice:initDataL2}.
\end{theorem}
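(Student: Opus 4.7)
The plan is to adapt the classical Wheeler-type elliptic-projection decomposition of the error to the present least-squares setting, where the elliptic projector $\ep_h$ has been tailored precisely so that the spatial terms in $\blfTot_n$ are absorbed cleanly.

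\textbf{Step 1: Decomposition.} I split $u^n - u_h^n = \rho^n + \theta^n$ with $\rho^n := u^n - \ep_h^\nabla \uu^n$ and $\theta^n := \ep_h^\nabla \uu^n - u_h^n$. The term $\rho^n$ is immediately controlled by \cref{cor:ep:rates}, giving $\norm{\rho^n}{} \lesssim h^{p+2}\enorm{\uu^n}_p$. The task then reduces to bounding $\theta^n$, or more precisely the full discrete error $\Theta^n := \ep_h\uu^n - \uu_h^n \in U_h$.

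\textbf{Step 2: Error equation.} Using that $\ssigma^n = \Amat\nabla u^n$ and that $-\div\ssigma^n - \bbeta\cdot\nabla u^n + \gamma u^n = f^n - \dot{u}^n$, a direct computation expresses $\blf_n(\uu^n, \vv_h)$ in terms of $u^n$, $f^n$, $\dot{u}^n$. Using the definition \cref{eq:def:ep} of $\ep_h$ (so that $\blfns_n(\ep_h\uu^n,\vv_h) = \blfns_n(\uu^n,\vv_h)$) and substituting into $\blfTot_n(\ep_h\uu^n,\vv_h) - F_n(\vv_h;f^n,u_h^{n-1})$, the spatial bilinear-form contributions cancel and what remains is the identity
\begin{align*}
  \blfTot_n(\Theta^n, \vv_h) = F_n(\vv_h;\, D^n,\, \theta^{n-1})
\end{align*}
for all $\vv_h \in U_h$, where $D^n := \frac{\ep_h^\nabla\uu^n - \ep_h^\nabla\uu^{n-1}}{\ts} - \dot{u}^n$. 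In other words, $\Theta^n$ solves the same discrete equation \cref{eq:euler} but with right-hand side $D^n$ and previous iterate $\theta^{n-1}$.

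\textbf{Step 3: Bounding the consistency term $D^n$.} This is where I expect the main technical work. Since $\ts_n \equiv \ts$ is constant and the coefficients are time-independent, the operator $\ep_h$ is itself time-independent and therefore commutes with $\partial_t$. I write
\begin{align*}
  D^n = -\frac{\rho^n - \rho^{n-1}}{\ts} + \left(\frac{u^n - u^{n-1}}{\ts} - \dot{u}^n\right).
\end{align*}
The first summand equals $\ts^{-1}\int_{t_{n-1}}^{t_n}(\dot{u}(s) - \ep_h^\nabla \dot\uu(s))\,ds$; applying \cref{cor:ep:rates} pointwise in time to $\dot\uu(s)$ yields $\lesssim h^{p+2}\sup_{t}\enorm{\dot\uu(t)}_p$. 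The second summand is the standard backward-Euler truncation error, bounded by $\tfrac{\ts}{2}\sup_t\norm{\ddot u}{}$ via Taylor's theorem. Altogether $\norm{D^n}{} \lesssim h^{p+2} + \ts$.

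\textbf{Step 4: Stability applied to the error equation.} Since the error equation for $\Theta^n$ has exactly the form \cref{eq:euler}, the proof of \cref{thm:stability:euler} applies verbatim, giving
\begin{align*}
  \norm{\theta^n}{} \leq \sum_{j=1}^n \ts\,\norm{D^j}{} + \norm{\theta^0}{} \lesssim T(h^{p+2} + \ts) + C_0 h^{p+2},
\end{align*}
where the bound on $\norm{\theta^0}{}$ is the assumption \cref{eq:choice:initDataL2}. Combining with Step~1 gives the claim $\norm{u^n - u_h^n}{} \lesssim h^{p+2} + \ts$. The only delicate point is verifying that $\ep_h$ commutes with $\partial_t$; this is where the assumption of uniform time step and time-independent coefficients enters in an essential way.
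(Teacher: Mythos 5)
Your proposal is correct and follows essentially the same route as the paper: the same elliptic-projection splitting, the same error equation (your $F_n(\vv_h;D^n,\theta^{n-1})$ is exactly the paper's right-hand side $\ip{\ep_h^\nabla\uu^n-u_h^{n-1}-\ts\dot u^n}{\frac1\ts v_h-\div\ttau_h-\bbeta\cdot\nabla v_h+\gamma v_h}$ rewritten), the same split of the consistency term into projection error plus backward-Euler truncation, and the same one-step stability bound telescoped to $n=0$. The only cosmetic difference is that you invoke the stability estimate of \cref{thm:stability:euler} as a black box where the paper rederives the one-step inequality inline; your observation that the commutation of $\ep_h$ with $\partial_t$ rests on the uniform time step is exactly the point the paper uses implicitly in its Step~4.
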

\begin{proof}
  We consider the error splitting
  \begin{align*}
    u^n-u_h^n = (u^n-\ep_h^\nabla \uu^n) + (\ep_h^\nabla\uu^n-u_h^n),
  \end{align*}
  where $\ep_h\uu\in U_h$ is the elliptic projection defined and analysed in~\cref{sec:duality}.

  \noindent
  \textbf{Step 1.}
  By~\cref{cor:ep:rates} we have that
  \begin{align*}
    \norm{u^n-\ep_h^\nabla\uu^n}{} \lesssim h^{p+2} \enorm{\uu^n}_p.
  \end{align*}

  \noindent
  \textbf{Step 2.}
  Observe that $\ep_h\uu^n$ satisfies the equation
  \begin{align*}
    \frac1\ts\ip{\ep_h^\nabla \uu^n}{v_h} + \blf(\ep_h\uu^n,\vv_h) &= 
    \frac1\ts\ip{\ep_h^\nabla \uu^n}{v_h} + \ip{\ep_h^\nabla\uu^n}{-\div\ttau_h-\bbeta\cdot\nabla v_h+\gamma v_h} +
    \blfns(\ep_h\uu^n,\vv_h) \\
    &= \frac1\ts\ip{\ep_h^\nabla \uu^n}{v_h} + \ip{\ep_h^\nabla\uu^n}{-\div\ttau_h-\bbeta\cdot\nabla v_h+\gamma v_h} +
    \blfns(\uu^n,\vv_h) \\
    &= \ip{\ep_h^\nabla \uu^n+\ts(f^n-\dot{u}^n)}{\frac1\ts v_h-\div\ttau_h-\bbeta\cdot\nabla v_h+\gamma v_h}
  \end{align*}
  By~\cref{eq:euler} the discrete solution satisfies
  \begin{align*}
    \frac1\ts\ip{u_h^n}{v_h} + \blf(\uu_h^n,\vv_h) = 
    \ts\ip{\frac{u_h^{n-1}}\ts+f^n}{\frac1\ts v_h-\div\ttau_h-\bbeta\cdot\nabla v_h+\gamma v_h}.
  \end{align*}
  Then, the error equations read
  \begin{align*}
    \frac1\ts\ip{\ep_h^\nabla \uu^n-u_h^n}{v_h} + \blf(\ep_h\uu^n-\uu_h,\vv_h) = 
    \ip{\ep_h^\nabla \uu^n-u_h^{n-1}-\ts\dot{u}^n}{\frac1\ts v_h-\div\ttau_h-\bbeta\cdot\nabla v_h+\gamma v_h}.
  \end{align*}
  The right-hand side is estimated with
  \begin{align*}
  &|\ip{\ep_h^\nabla \uu^n-u_h^{n-1}-\ts\dot{u}^n}{\frac1\ts v_h-\div\ttau_h-\bbeta\cdot\nabla v_h+\gamma v_h}| \\ 
  &\qquad  \leq \ts^{-1/2}\norm{\ep_h^\nabla \uu^n-u_h^{n-1}-\ts\dot{u}^n}{} \ts^{1/2}\norm{\frac1\ts
      v_h-\div\ttau_h-\bbeta\cdot\nabla v_h+\gamma v_h}{} \\
      &\qquad \leq \ts^{-1/2}\norm{\ep_h^\nabla \uu^n-u_h^{n-1}-\ts\dot{u}^n}{} \blfTot(\vv_h,\vv_h)^{1/2}.
  \end{align*}
  Choosing $\vv_h = \ep_h\uu^n-\uu_h^n$, this shows that
  \begin{align*}
    \blfTot(\vv_h,\vv_h) \leq \ts^{-1/2}\norm{\ep_h^\nabla \uu^n-u_h^{n-1}-\ts\dot{u}^n}{}
    \blfTot(\vv_h,\vv_h)^{1/2}.
  \end{align*}
  Dividing by $\blfTot(\vv_h,\vv_h)^{1/2}$, using coercivity of $\blf$ and multiplying by $\ts^{1/2}$, 
  yield 
  \begin{align*}
    \norm{\ep_h^\nabla \uu^n-u_h^n}{} &\leq \norm{\ep_h^\nabla \uu^n-u_h^{n-1}-\ts\dot{u}^n}{} 
    \leq \norm{\ep_h^\nabla\uu^{n-1}-u_h^{n-1}}{} + \norm{\ep_h^\nabla (\uu^n-\uu^{n-1})-\ts\dot{u}^n}{}
  \end{align*}

  \noindent
  \textbf{Step 3.}
  We follow~\cite{thomee} and rewrite 
  \begin{align*}
    \ep_h^\nabla (\uu^n-\uu^{n-1})-\ts\dot{u}^n = [\ep_h^\nabla(\uu^n-\uu^{n-1})-(u^n-u^{n-1})] +
    [u^n-u^{n-1}-\ts\dot{u}^n] =: e_h^{n,1}+e_h^{n,2}.
  \end{align*}

  \noindent
  \textbf{Step 4.}
  We estimate $\norm{e_h^{n,1}}{}$:
  Writing $\uu^n-\uu^{n-1}=\int_{t_{n-1}}^{t_n} \dot{\uu}\,ds$ and applying~\cref{cor:ep:rates} give us
  \begin{align*}
    \norm{e_h^{n,1}}{} &\leq \int_{t_{n-1}}^{t_n} \norm{\ep_h^\nabla\dot{\uu}-\dot{u}}{} \,ds \lesssim 
    h^{p+2} \int_{t_{n-1}}^{t_n} (\norm{\dot{u}}{H^{p+2}(\Omega)} + \norm{\dot\ssigma}{H^{p+1}(\Omega)} + 
    \ts^{1/2}\norm{\div\dot{\ssigma}}{H^{p+1}(\TT)} ) \,ds.
  \end{align*}

  \noindent
  \textbf{Step 5.}
  The representation $e_h^{n,2} = u^n-u^{n-1}-\ts \dot{u}^n = \int_{t_{n-1}}^{t_n}(t_{n-1}-s)\ddot{u}\,ds$ shows that
  \begin{align*}
    \norm{e_h^{n,2}}{} \leq \ts \int_{t_{n-1}}^{t_n} \norm{\ddot{u}}{} \,ds.
  \end{align*}

  \noindent
  \textbf{Step 6.}
  Putting altogether we infer that
  \begin{align*}
    \norm{\ep_h^\nabla \uu^n-u_h^n}{} &\leq \norm{\ep_h^\nabla\uu^{n-1}-u_h^{n-1}}{} + \norm{e_h^{n,1}}{} 
    + \norm{e_h^{n,2}}{} \\
    &\leq \norm{\ep_h^\nabla\uu^{n-1}-u_h^{n-1}}{} 
    + C h^{p+2} \int_{t_{n-1}}^{t_n} \enorm{\dot\uu}_p \,ds 
    + \ts \int_{t_{n-1}}^{t_n} \norm{\ddot{u}}{} \,ds.
  \end{align*}
  Iterating the above arguments and estimate~\cref{eq:choice:initDataL2} finish the proof.
\end{proof}

\subsection{Optimal error estimate in the natural norm}
For the proof of our next main result we need the following version of the well-known discrete Gronwall inequality, cf.~\cite[Lemma~10.5]{thomee}.
\begin{lemma}\label{lem:gronwall}
  Let $\alpha_n,\beta_n,\gamma_n$ be sequences, where $\beta_n$ is non-decreasing, $\gamma_n\geq 0$.
  If 
  \begin{align*}
    \alpha_n \leq \beta_n + \sum_{j=0}^{n-1} \gamma_j \alpha_j \quad\text{for all } n=1,\dots,N,
  \end{align*}
  then
  \begin{align*}
    \alpha_n \leq \beta_n e^{\sum_{j=0}^{n-1}\gamma_j} \quad\text{for all }n=1,\dots,N.
  \end{align*}
  \qed
\end{lemma}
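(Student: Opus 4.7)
The plan is to prove the claim by induction on $n$, with the inductive hypothesis being that the asserted bound $\alpha_j \leq \beta_j e^{S_j}$ holds for all indices strictly less than $n$, where I abbreviate $S_j := \sum_{k=0}^{j-1}\gamma_k$ (so $S_0 = 0$). The base case $\alpha_0 \leq \beta_0$ follows from the hypothesis evaluated at $n=0$ (an empty sum) or is built into the convention; this matches the use of the lemma in the paper, where $\alpha_0$ is controlled by the initial discrete error.

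For the inductive step, the first move is to exploit the monotonicity of $\beta_n$ to rewrite the inductive bound uniformly: since $\beta_j \leq \beta_n$ for $j \leq n$, we get $\alpha_j \leq \beta_n e^{S_j}$. Plugging this into the hypothesis of the lemma yields
\begin{align*}
  \alpha_n \leq \beta_n + \sum_{j=0}^{n-1}\gamma_j \alpha_j
  \leq \beta_n\Bigl(1 + \sum_{j=0}^{n-1}\gamma_j e^{S_j}\Bigr).
\end{align*}
It therefore suffices to establish the purely analytic inequality
\begin{align*}
  1 + \sum_{j=0}^{n-1}\gamma_j e^{S_j} \leq e^{S_n}.
\end{align*}

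The crucial trick here is the elementary bound $e^x \geq 1+x$ for $x \geq 0$, applied with $x = \gamma_j$. It gives $e^{S_{j+1}} - e^{S_j} = e^{S_j}(e^{\gamma_j}-1) \geq e^{S_j}\gamma_j$, which after summation telescopes to
\begin{align*}
  \sum_{j=0}^{n-1}\gamma_j e^{S_j} \leq \sum_{j=0}^{n-1}\bigl(e^{S_{j+1}} - e^{S_j}\bigr) = e^{S_n} - 1.
\end{align*}
Adding $1$ and inserting this back closes the induction.

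The argument is almost entirely routine; I expect the only real bookkeeping point to be the clean decoupling between the monotonicity of $\beta_n$ (which lets one factor $\beta_n$ out) and the telescoping bound (which depends only on the nonnegative weights $\gamma_j$). The inequality $e^x \geq 1+x$ is the one nontrivial ingredient, and everything else is a direct induction.
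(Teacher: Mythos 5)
Your argument is correct, and it is the standard proof of the discrete Gronwall inequality; the paper itself gives no proof at all here but simply defers to \cite[Lemma~10.5]{thomee}, so there is nothing to compare against beyond noting that your induction (factor out $\beta_n$ by monotonicity, then telescope $\gamma_j e^{S_j}\leq e^{S_{j+1}}-e^{S_j}$ via $e^x\geq 1+x$) is exactly the classical route. Two small points of hygiene, both of which you essentially anticipated: the recursion is only assumed for $n=1,\dots,N$, so the base case $\alpha_0\leq\beta_0$ is an extra (implicit) hypothesis rather than a consequence of an ``empty sum'' instance, and your final step, multiplying $1+\sum_{j=0}^{n-1}\gamma_je^{S_j}\leq e^{S_n}$ by $\beta_n$, silently uses $\beta_n\geq 0$. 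Both conditions hold in the paper's application (where $\alpha_j=\enorm{w^j}^2$ and $\beta_n$ is a sum of squares containing $\enorm{w^0}^2$) and are part of the hypotheses in the cited source, but strictly speaking the lemma as printed omits them, and without them the conclusion can fail; it would be worth stating them explicitly.
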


We will assume that the initial data $u_h^0$ for problem~\cref{eq:euler} is chosen such that
\begin{align}\label{eq:choice:initDataEnergy}
  \norm{\nabla(\ep_h^\nabla \uu^0-u_h^0)}{} \leq C_0 h^{p+1},
\end{align}
where $C_0$ is some generic constant depending also on $\uu^0$
and $\ep_h=(\ep_h^\nabla,\ep_h^\div)$ is the elliptic projection operator from~\cref{sec:duality}.
Recall the norm $\enorm\cdot_p$ from~\cref{sec:apriori:L2}.

\begin{theorem}\label{thm:euler:energy}
  Suppose $\ts_n = \ts>0$ for $n=1,\dots,N$, let $\uu_h^n=(u_h^n,\ssigma_h^n)\in U_h^n=U_h$
  denote the solution of~\cref{eq:euler} and let $\uu=(u,\ssigma)$ denote the solution of~\cref{eq:fo}.
  If $u_h^0$ satisfies~\cref{eq:choice:initDataEnergy}, then
  \begin{align*}
    \norm{\uu^n-\uu_h^n}\ts \leq C(\uu)( h^{p+1} + \ts),
  \end{align*}
  where $C(\uu) = C(\enorm{\uu}_p,\enorm{\dot\uu}_p,\norm{\ddot u}{},C_0)$ and $C_0$ is the constant
  from~\cref{eq:choice:initDataEnergy}.
\end{theorem}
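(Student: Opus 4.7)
The plan is to mirror the proof of Theorem~\ref{thm:euler:L2}: split the error via the elliptic projection, derive the same error equation for the projected piece, and iterate, but now target the natural norm $\norm{\cdot}\ts$ instead of $L^2$. First, I would write $\uu^n - \uu_h^n = (\uu^n - \ep_h\uu^n) + \vv_h^n$ with $\vv_h^n := \ep_h\uu^n - \uu_h^n$, and use Corollary~\ref{cor:ep:rates} to bound the first summand by $Ch^{p+1}\enorm{\uu^n}_p$ in $\norm{\cdot}\ts$.

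Next, I would reuse the error equation derived in Step~2 of the proof of Theorem~\ref{thm:euler:L2}:
\begin{align*}
\blfTot_n(\vv_h^n, \ww_h) = \ip{R^n}{\tfrac{1}{\ts}w_h - \div\zeta_h - \bbeta\cdot\nabla w_h + \gamma w_h}, \qquad \ww_h=(w_h,\zeta_h)\in U_h,
\end{align*}
with $R^n = v_h^{n-1} + e_h^{n,1} + e_h^{n,2}$ and $v_h^{n-1} := \ep_h^\nabla\uu^{n-1} - u_h^{n-1}$. Testing with $\ww_h = \vv_h^n$ and invoking the coercivity from Lemma~\ref{lem:blf} gives the lower bound $c_\blf\norm{\vv_h^n}\ts^2 + \tfrac{1}{\ts}\norm{v_h^n}{}^2$ on the left-hand side. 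For the right-hand side I would exploit that each summand of $R^n$ lies in $H_0^1(\Omega)$---for $v_h^{n-1}$ because both $\ep_h^\nabla\uu^{n-1}$ and $u_h^{n-1}$ belong to $\cS_0^{p+1}(\TT)$, and for $e_h^{n,1}, e_h^{n,2}$ by the homogeneous Dirichlet boundary condition on $u$---so that integration by parts transfers $-\div\ttau_h^n$ onto $\nabla R^n$ and converts the convection and reaction contributions into $\ip{\div(R^n\bbeta) + \gamma R^n}{v_h^n}$, giving a bound purely in terms of $\norm{R^n}{}$ and $\norm{\nabla R^n}{}$ times $\norm{\vv_h^n}\ts$. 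These in turn are controlled using $\norm{\nabla v_h^{n-1}}{} \leq \norm{\vv_h^{n-1}}\ts$ together with the weakened $L^2$ estimate $\norm{v_h^{n-1}}{} \leq C(\uu)(h^{p+1}+\ts)$, obtained by re-running the iteration of Theorem~\ref{thm:euler:L2} under the weaker initial datum~\cref{eq:choice:initDataEnergy} in place of~\cref{eq:choice:initDataL2}. A $\ts$-weighted Young's inequality on the remaining $\tfrac{1}{\ts}\ip{R^n}{v_h^n}$ term absorbs $\tfrac{1}{2\ts}\norm{v_h^n}{}^2$ back into the left-hand side, and the consistency pieces $e_h^{n,1}, e_h^{n,2}$ are handled as in Steps~4 and~5 of Theorem~\ref{thm:euler:L2}.

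The result is a recursion of the form $\norm{\vv_h^n}\ts^2 \leq \beta_n + C\ts\sum_{j=0}^{n-1}\norm{\vv_h^j}\ts^2$ with $\beta_n \leq C(\uu)(h^{p+1}+\ts)^2$, which I close via Lemma~\ref{lem:gronwall} starting from $\norm{\vv_h^0}\ts \leq C_0 h^{p+1}$, obtained from~\cref{eq:choice:initDataEnergy} after setting the vector-valued initial piece to $\ep_h^\div\uu^0$. The main obstacle is the scaling: a direct Cauchy--Schwarz on the right-hand side against $\blfTot_n^{1/2}(\vv_h^n,\vv_h^n)$ yields only $\norm{\vv_h^n}\ts \lesssim \ts^{-1/2}\norm{R^n}{}$, which is too weak when $R^n$ contains $v_h^{n-1}$ of order $h^{p+1}+\ts$. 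The combination of the $H_0^1$-regularity of every summand of $R^n$ (allowing integration by parts) with the independent $L^2$-control of $v_h^{n-1}$ is what ultimately eliminates the spurious $\ts^{-1/2}$ loss and permits the final Gronwall step to produce the claimed bound $C(\uu)(h^{p+1}+\ts)$.
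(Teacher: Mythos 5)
Your overall scaffolding (elliptic-projection splitting, the error equation with $R^n=w^{n-1}+e_h^{n,1}+e_h^{n,2}$, re-running the $L^2$ iteration under the weaker initial datum, Gronwall at the end) matches the paper, and you correctly diagnose that a naive Cauchy--Schwarz loses a factor $\ts^{-1/2}$. But the mechanism you propose to repair this does not close. Testing with $\ww_h=\vv_h^n$ and integrating by parts turns $\ip{R^n}{-\div\cchi^n}$ into $\ip{\nabla R^n}{\cchi^n}$, and in particular produces the coupling $\ip{\nabla w^{n-1}}{\cchi^n}$. The only quantity on the left available to absorb $\norm{\cchi^n}{}^2$ is $c_\blf\norm{\vv_h^n}\ts^2$ (there is no $\ts^{-1}\norm{\cchi^n}{}^2$ term), so Young's inequality leaves you with $C\norm{\nabla w^{n-1}}{}^2\leq C\norm{\vv_h^{n-1}}\ts^2$ with an $O(1)$ constant, \emph{not} the $C\ts\,\norm{\vv_h^{n-1}}\ts^2$ your recursion requires. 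Gronwall with increments $\gamma_j=C$ over $N=T/\ts$ steps gives $e^{CT/\ts}$, which blows up as $\ts\to0$; the weakened $L^2$ bound $\norm{w^{n-1}}{}\lesssim h^{p+1}+\ts$ does not rescue this, since the problematic term involves $\nabla w^{n-1}$ and $\cchi^n$, not $\norm{w^{n-1}}{}$. (A secondary gap: bounding $\norm{\nabla R^n}{}$ requires $\norm{\nabla e_h^{n,2}}{}\lesssim\ts\int\norm{\nabla\ddot u}{}$, i.e.\ regularity of $\nabla\ddot u$ that is not part of the constant $C(\enorm{\uu}_p,\enorm{\dot\uu}_p,\norm{\ddot u}{},C_0)$ in the statement.)

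The paper avoids this by two devices that are absent from your plan. First, it tests the error equation only with $(v_h,0)$ and chooses $v_h=w^n-w^{n-1}$, the discrete time increment: the reduced spatial form $c(\cdot,\cdot)$ on $H_0^1(\Omega)$ is symmetric and coercive, so $c(w^n,w^n-w^{n-1})=\tfrac12(\enorm{w^n}^2-\enorm{w^{n-1}}^2+\enorm{w^n-w^{n-1}}^2)$ telescopes, while the term $\tfrac1\ts\norm{w^n-w^{n-1}}{}^2$ appearing on the left is exactly the quantity into which all the $\tfrac{\varepsilon}{\ts}\norm{v_h}{}^2$ remainders from Young's inequality are absorbed with a \emph{fixed} $\varepsilon$; this is how every occurrence of $\norm{\nabla w^{n-1}}{}^2$ in the recursion acquires the factor $\ts$ needed for Gronwall. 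Second, the flux error is decoupled: testing separately with $(0,\ttau_h)$, $\ttau_h=\cchi^n$, yields $\norm{\cchi^n}{}^2+\ts\norm{\div\cchi^n}{}^2\lesssim\ts^{-1}\norm{e_h^n}{}^2+\norm{\nabla w^{n-1}}{}^2+\ts\norm{\nabla w^n}{}^2$, which is used inside the recursion only through $\ts^2\norm{\div\cchi^n}{}^2\lesssim\norm{e_h^n}{}^2+\ts\norm{\nabla w^{n-1}}{}^2$ (again carrying the good factor $\ts$) and is otherwise invoked only \emph{after} Gronwall to convert the bound on $\norm{\nabla w^n}{}$ into one on $\norm{\ww^n}\ts$. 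Without the time-increment test and this scalar/flux decoupling, the recursion you write down cannot be established.
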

\begin{proof}
  We consider the error splitting
  \begin{align*}
    \uu^n-\uu_h^n = (\uu^n-\ep_h\uu^n) + (\ep_h\uu^n-\uu_h^n),
  \end{align*}
  where $\ep_h$ denotes the elliptic projection operator from~\cref{sec:duality}.

  \noindent
  \textbf{Step 1.}
  By~\cref{cor:ep:rates} we have the estimate
  \begin{align*}
    \norm{\uu^n-\ep_h\uu^n}\ts \lesssim h^{p+1} \enorm{\uu^n}_p.
  \end{align*}

  \noindent
  \textbf{Step 2.}
  If we write $\ww^n := (w^n,\cchi^n) := \ep_h\uu^n-\uu_h^n$ and $e_h^n := \ep_h^\nabla(\uu^n-\uu^{n-1})-\ts\dot{u}^n$,
  then the error equations from the proof of~\cref{thm:euler:L2} read
  \begin{align}\label{eq:euler:H1:a}
  \begin{split}
    \frac1\ts\ip{w^n}{v_h} + \blf(\ww^n,\vv_h) 
    = \frac1\ts \ip{e_h^n+w^{n-1}}{v_h+\ts(-\div\ttau_h-\bbeta\cdot\nabla v_h+\gamma v_h)}.
  \end{split}
  \end{align}
  We test this identity with $\vv_h = (v_h,\ttau_h) = (v_h,0)$. First note that by integration by parts,
  \begin{align*}
    \blf(\ww^n,\vv_h) &= \ip{w^n}{-\bbeta\cdot\nabla v_h + \gamma v_h} + \ip{-\div\cchi^n-\bbeta\cdot\nabla w^n +\gamma w^n}{v_h}
    \\ &\quad + \ts \ip{-\div\cchi^n-\bbeta\cdot\nabla w^n+\gamma w^n}{-\bbeta\cdot\nabla v_h + \gamma v_h} 
    \\ &\quad+ \ip{\Amat^{1/2}\nabla w^n-\Amat^{-1/2}\cchi^n}{\Amat^{1/2}\nabla v_h}
    \\
    & = \ip{w^n}{-\bbeta\cdot\nabla v_h + \gamma v_h} + \ip{-\bbeta\cdot\nabla w^n +\gamma w^n}{v_h}
    \\
    &\quad + \ts \ip{-\bbeta\cdot\nabla w^n+\gamma w^n}{-\bbeta\cdot\nabla v_h + \gamma v_h}
    + \ip{\Amat\nabla w^n}{\nabla v_h} - \ts\ip{\div\cchi^n}{-\bbeta\cdot\nabla v_h + \gamma v_h}.
  \end{align*}
  If we put the term $\ts\ip{\div\cchi^n}{-\bbeta\cdot\nabla v + \gamma v_h}$ to the
  right-hand side and the term $\frac1\ts \ip{w^{n-1}}v$ to the left-hand side, we obtain
  \begin{align}\label{eq:euler:H1:b}
  \begin{split}
    \frac1\ts \ip{w^n-w^{n-1}}{v_h} + c(w^n,v_h) &= \frac1\ts\ip{e_h^n}{v_h+\ts(-\bbeta\cdot\nabla v_h+\gamma v_h)} 
    \\ 
     &\qquad + \ip{w^{n-1}}{-\bbeta\cdot\nabla v_h + \gamma v_h} + \ts\ip{\div\cchi^n}{-\bbeta\cdot\nabla v_h + \gamma v_h},
  \end{split}
  \end{align}
  where we use $c: H_0^1(\Omega) \times H_0^1(\Omega)\to \R$ to denote the bilinear form defined by
  \begin{align*}
    c(u,v) &:= \ip{u}{-\bbeta\cdot\nabla v + \gamma v} + \ip{-\bbeta\cdot\nabla u +\gamma u}{v}  
    \\
    &\qquad + \ts \ip{-\bbeta\cdot\nabla u+\gamma u}{-\bbeta\cdot\nabla v + \gamma v}
    + \ip{\Amat\nabla u}{\nabla v} \quad\text{for all } u,v \in H_0^1(\Omega).
  \end{align*}
  Using $\ip{-\bbeta\cdot\nabla u+\gamma u}u \geq 0$, 
  it is straightforward to check that $c(\cdot,\cdot)$ defines a coercive and bounded bilinear form on $H_0^1(\Omega)$.
  In particular, $c(u,u) \simeq \norm{\nabla u}{}^2$ for all $u\in H_0^1(\Omega)$, where the equivalence constants
  depend only on $T$ and the coefficients $\Amat$, $\bbeta$, $\gamma$ but are otherwise independent of $\ts$.

  \noindent
  \textbf{Step 3.}
  The bilinear form $c(\cdot,\cdot)$ is symmetric and coercive on $H_0^1(\Omega)$.
  In particular, it defines a scalar product that induces the norm $\enorm\cdot$. 
  Observe that for $w,v\in H_0^1(\Omega)$ we have
  \begin{align*}
    c(w,w-v) = \frac12( \enorm{w}^2-\enorm{v}^2 + \enorm{w-v}^2).
  \end{align*}

  \noindent
  \textbf{Step 4.}
  We estimate the three terms on the right-hand side of~\cref{eq:euler:H1:b} separately.
  Throughout this step, let $\varepsilon>0$ denote the parameter for Young's inequality, which will be fixed later on.
  First,
  \begin{align*}
  \frac1\ts \ip{e_h^n}{v_h+\ts(-\bbeta\cdot\nabla v_h+\gamma v_h)} &\leq \frac{\varepsilon^{-1}}{2\ts}
  \norm{e_h^n}{}^2 + \frac\varepsilon\ts\norm{v_h}{}^2 + \varepsilon\ts\norm{-\bbeta\cdot\nabla v_h+\gamma v_h}{}^2 
  \\ &\leq \frac{\varepsilon^{-1}}{2\ts} \norm{e_h^n}{}^2 + \varepsilon\left(\frac1\ts\norm{v_h}{}^2 
  +\enorm{v_h}^2\right).
  \end{align*}
  Second, integration by parts and standard estimates show that
  \begin{align*}
    \ip{w^{n-1}}{-\bbeta\cdot\nabla v_h + \gamma v_h} &= \ip{(\div\bbeta+\gamma) w^{n-1}+\bbeta\cdot\nabla w^{n-1}}{v_h}
    \leq C_1 \varepsilon^{-1}\ts \norm{\nabla w^{n-1}}{}^2 + \frac\varepsilon{2\ts} \norm{v_h}{}^2.
  \end{align*}
  Third,
  \begin{align*}
    \ts\ip{\div\cchi^n}{-\bbeta\cdot\nabla v_h+\gamma v_h} \leq \varepsilon^{-1}\ts^2\norm{\div\cchi^n}{}^2 
    + C_2 \eps \enorm{v_h}^2.
  \end{align*}

  \noindent
  \textbf{Step 5.}
  We estimate $\ts^2\norm{\div\cchi^n}{}^2$. To do so, we consider the error equations~\cref{eq:euler:H1:a} but now test
  with $\vv_h = (0,\ttau_h)$ which gives us
  \begin{align*}
    &\ip{w^n}{-\div\ttau_h} + \ts\ip{-\div\cchi^n-\bbeta\cdot\nabla w^n + \gamma w^n}{-\div\ttau_h}
    + \ip{\Amat^{1/2}\nabla w^n-\Amat^{-1/2}\cchi^n}{-\Amat^{-1/2}\ttau_h} 
    \\ &\qquad = \ip{e_h^n}{-\div\ttau_h} + \ip{w^{n-1}}{-\div\ttau_h}.
  \end{align*}
  Integration by parts (on left-hand and right-hand side) and putting the term $\ts\ip{-\bbeta\cdot\nabla w^n+\gamma
  w^n}{\div\ttau_h}$ on the right-hand side further gives
  \begin{align*}
    \ip{\Amat^{-1}\cchi^n}{\ttau_h} + \ts\ip{\div\cchi^n}{\div\ttau_h} = \ip{e_h^n}{-\div\ttau_h}
    + \ip{\nabla w^{n-1}}{\ttau_h} + \ts\ip{-\bbeta\cdot\nabla w^n+\gamma w^n}{\div\ttau_h}
  \end{align*}
  Choosing $\ttau_h= \cchi^n$ and similar estimates as in the previous step prove
  \begin{align*}
    \norm{\cchi^n}{}^2 + \ts\norm{\div\cchi^n}{}^2 \lesssim \frac1\ts \norm{e_h^n}{}^2  
    + \norm{\nabla w^{n-1}}{}^2 + \ts\norm{\nabla w^n}{}^2.
  \end{align*}
  Hence,
  \begin{align*}
    \ts^2\norm{\div\cchi^n}{}^2 \lesssim \norm{e_h^n}{}^2  
    +\ts \norm{\nabla w^{n-1}}{}^2 + \ts^2\norm{\nabla w^n}{}^2.
  \end{align*}
  The last term can be further estimated as follows: Note that using $\vv_h=\ww^n$ in~\cref{eq:euler:H1:a} 
  and standard estimates, we obtain
  \begin{align*}
    \norm{\nabla w^n}{}^2 \lesssim \frac1\ts \norm{e_h^n}{}^2 + \frac1\ts \norm{w^{n-1}}{}^2 
    \lesssim \frac1\ts \norm{e_h^n}{}^2 + \frac1\ts \norm{\nabla w^{n-1}}{}^2.
  \end{align*}
  Therefore,
  \begin{align*}
    \ts^2\norm{\div\cchi^n}{}^2 \lesssim \norm{e_h^n}{}^2  
    +\ts \norm{\nabla w^{n-1}}{}^2 + \ts^2\norm{\nabla w^n}{}^2 \lesssim \norm{e_h^n}{}^2 +\ts \norm{\nabla w^{n-1}}{}^2.
  \end{align*}
  
  \noindent
  \textbf{Step 6.}
  Take $v_h = w^n-w^{n-1}$ in~\cref{eq:euler:H1:b}. Combining this with Step~3--5 we obtain
  \begin{align*}
    \frac1\ts\norm{v_h}{}^2 
    + \frac12\left(\enorm{w^n}^2 -\enorm{w^{n-1}}^2 + \enorm{v_h}^2\right) 
    &\leq \frac{\varepsilon^{-1}}{2\ts} \norm{e_h^n}{}^2 
    + \varepsilon\left( \frac1\ts\norm{v_h}{}^2 + \enorm{v_h}^2 \right) \\
    &\qquad + C_1{\varepsilon^{-1}}\ts \norm{\nabla w^{n-1}}{}^2 + \frac{\varepsilon}{2\ts} \norm{v_h}{}^2 \\
    &\qquad + \varepsilon^{-1} C_3(\norm{e_h^n}{}^2 + \ts\norm{\nabla w^{n-1}}{}^2)
    + C_2\varepsilon \enorm{v_h}{}^2.
  \end{align*}
  We subtract terms on the right-hand side for sufficient small $\varepsilon$ which after multiplication by $2$
  leads us to
  \begin{align*}
    \enorm{w^n}^2-\enorm{w^{n-1}}^2 \leq \frac{C_4}\ts\norm{e_h^n}{}^2 
    + C_5 \ts \norm{\nabla w^{n-1}}{}^2.
  \end{align*}
  Then, iterating the above arguments and norm equivalence $\norm{\nabla(\cdot)}{}^2\leq C_6\enorm\cdot^2$ yield
  \begin{align*}
    \enorm{w^n}^2 \leq \sum_{j=1}^n \frac{C_4}\ts\norm{e_h^j}{}^2 
      + \enorm{w^0}^2
      + \sum_{j=0}^{n-1} C_5\ts\norm{\nabla w^j}{}^2
      \leq \sum_{j=1}^n \frac{C_4}\ts\norm{e_h^j}{}^2 
      + \enorm{w^0}^2
      + \sum_{j=0}^{n-1} C_6\ts\enorm{w^j}^2.
  \end{align*}
  We apply~\cref{lem:gronwall} with 
  \begin{align*}
    \alpha_n := \enorm{w^n}{}^2, \quad \beta_n:= \sum_{j=1}^n \frac{C_4}\ts\norm{e_h^j}{}^2 + \enorm{w^0}^2,
    \quad \text{and } \gamma_n := C_6 \ts,
  \end{align*}
  which proves
  \begin{align*}
    \norm{\nabla w^n}{}^2 \simeq \enorm{w^n}^2 \lesssim \sum_{j=1}^n \frac1\ts\norm{e_h^j}{}^2 + \norm{\nabla
    w^0}{}^2.
  \end{align*}
  We estimate the terms $\norm{e_h^n}{}$:
  As in the proof of~\cref{thm:euler:L2} we write
  \begin{align*}
    e_h^n := e_h^{n,1}+e_h^{n,2} := [\ep_h^\nabla(\uu^n-\uu^{n-1})-(u^n-u^{n-1})]
    + [u^n-u^{n-1}-\ts\dot{u}^n].
  \end{align*}

  \noindent
  \textbf{Step 7.}
  \Cref{cor:ep:rates} together with the Cauchy-Schwarz inequality (for the time integral) gives us
  \begin{align*}
    \norm{(1-\ep_h^\nabla)(\uu^n-\uu^{n-1})}{} &\lesssim \int_{t_{n-1}}^{t_n} h^{p+1}
    \enorm{\dot\uu}_p \,ds 
    \leq \ts^{1/2} \left( \int_{t_{n-1}}^{t_n} h^{2(p+1)} \enorm{\dot\uu}_p^2 \,ds \right)^{1/2}.
  \end{align*}
  Thus, 
  \begin{align*}
    \frac1\ts \norm{e_h^{n,1}}{}^2 \lesssim h^{2(p+1)} 
    \int_{t_{n-1}}^{t_n} \enorm{\dot\uu}_p^2 \,ds
  \end{align*}

  \noindent
  \textbf{Step 8.}
  To estimate $\norm{e_h^{n,2}}{}$ we proceed as in the proof of~\cref{thm:euler:L2} and
  similar as in Step~7 to obtain 
  \begin{align*}
    \norm{u^n-u^{n-1}-\ts\dot{u}^n}{} \leq \ts \int_{t_{n-1}}^{t_n} \norm{\ddot{u}}{} \,ds 
    \leq \ts^{3/2} \left(\int_{t_{n-1}}^{t_n} \norm{\ddot{u}(t)}{}^2 \,ds\right)^{1/2}
  \end{align*}
  and thus,
  \begin{align*}
    \frac1{\ts}\norm{e_h^{n,2}}{}^2 \leq \ts^2 \int_{t_{n-1}}^{t_n} \norm{\ddot{u}(t)}{}^2 \,ds.
  \end{align*}

  \noindent
  \textbf{Step 9.}
  Putting altogether we infer 
  \begin{align*}
    \norm{\nabla w^n}{}^2 \lesssim \sum_{j=1}^n \frac1\ts \norm{e_h^j}{}^2 + \norm{\nabla w^0}{}^2
    &\lesssim h^{2(p+1)} 
    \int_0^{t_n} \enorm{\dot\uu}_p^2 \,ds + \ts^2 \int_0^{t_n} \norm{\ddot{u}}{}^2 \,ds + \norm{\nabla w^0}{}^2.
  \end{align*}

  \noindent
  \textbf{Step 10.}
  To get an estimate for $\norm{\ww^n}\ts$ it remains to estimate $\norm{\cchi^n}{}^2
  +\ts\norm{\div\cchi^n}{}^2$. 
  With similar arguments as in Step~5 we infer with Step~9 that
  \begin{align*}
    \norm{\cchi^n}{}^2 +\ts\norm{\div\cchi^n}{}^2 \lesssim \frac1\ts \norm{e_h^n}{}^2 + \norm{\nabla w^{n-1}}{}^2
    \lesssim h^{2(p+1)} + \ts^2 + \norm{\nabla w^0}{}^2.
  \end{align*}
  The estimate~\cref{eq:choice:initDataEnergy} for $\norm{\nabla w^0}{}$ finishes the proof.
\end{proof}

\section{Extension to a different problem}\label{sec:ext}
In this section we show that the techniques developed previously also apply for different first-order systems.
We consider
\begin{subequations}\label{eq:fo:alt}
\begin{align}
  \dot{u}-\div\ssigma+\gamma u &= f, \\
  \ssigma-\Amat\nabla u + \bbeta u &= 0, \\
  u|_\Gamma &= 0.
\end{align}
\end{subequations}
Note that this problem admits a unique solution (under the same assumptions as used for~\cref{eq:fo}
and~\cref{eq:regularity:b}).

Following~\cref{sec:main} we replace the time derivative by a finite difference approximation
which leads to the time-stepping scheme 
\begin{align*}
  \frac{u-\widetilde u^{n-1}}{\ts_n} - \div\ssigma + \gamma u &= f^n, \\
  \Amat^{-1/2}\ssigma - \Amat^{1/2}\nabla u + \Amat^{-1/2}\bbeta u &= 0, \\
  u|_\Gamma &= 0,
\end{align*}
where $\widetilde u^{n-1}$ denotes an approximation of $u^{n-1}$.

In what follows we redefine the bilinear forms $\blf_n,\blfns_n,\blfTot_n$ and the functional $F_n$
from~\cref{sec:main} to account for the different first-order system~\cref{eq:fo:alt}.
We show that the main results on the bilinear forms, i.e.,~\cref{lem:blf,thm:ep} hold true.
Moreover,~\cref{thm:euler:L2} and~\cref{thm:euler:energy} transfer to the present situation following the same lines of
proof with some minor modifications.

\subsection{Backward Euler method}
Define $\blfns_n,\blf_n,\blfTot_n : U\times U \to \R$, $F_n : U\to \R$ by
\begin{subequations}\label{eq:def:blf:alt}
\begin{align}
  \blfns_n(\uu,\vv) &:= \ip{-\div\ssigma+\gamma u}{v} 
  + \ts_n \ip{-\div\ssigma+\gamma u}{-\div\ttau+\gamma v} \\
  &\qquad + \ip{\Amat^{-1/2}\ssigma-\Amat^{1/2}\nabla u+\Amat^{-1/2}\bbeta u}{\Amat^{-1/2}\ttau-\Amat^{1/2}\nabla v+\Amat^{-1/2}\bbeta v}
  \\ 
  \blf_n(\uu,\vv) &:= \ip{u}{-\div\ttau+\gamma v} + \blfns_n(\uu,\vv), \\ 
  \blfTot_n(\uu,\vv) &:= \frac1{\ts_n}\ip{u}v + \blf_n(\uu,\vv), \\
  F_n(\vv;f^n,w) &:= \ip{f^n+\frac{w}{\ts_n}}{v+\ts_n(-\div\ttau+\gamma v)}.
\end{align}
\end{subequations}
The \emph{backward Euler} method then reads: Given $u_h^0\in L^2(\Omega)$, find $\uu_h^n\in U_h^n$ such that for all
$n=1,\dots,N$ 
\begin{align}\label{eq:euler:alt}
  \blfTot_n(\uu_h^n,\vv) = F_n(\vv;f^n,u_h^{n-1}) \quad\text{for all } \vv\in U_h^n.
\end{align}

Similar as in~\cref{sec:main} we conclude:
\begin{theorem}\label{thm:euler:alt}
  For all $n=1,\dots,N$ Problem~\cref{eq:euler:alt} admits a unique solution $\uu_h^n\in U_h^n$.
  \qed
\end{theorem}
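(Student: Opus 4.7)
The plan is to mirror the proof of~\cref{thm:euler}: since $U_h^n\subseteq U$ is a closed subspace, it suffices to verify continuity and coercivity of the redefined bilinear form $\blfTot_n$ from~\cref{eq:def:blf:alt} on $U$ with respect to $\norm{\cdot}{\ts_n}$, together with continuity of the linear functional $F_n(\cdot;f^n,u_h^{n-1})$; the Lax--Milgram theorem then yields unique solvability on $U_h^n$. (Alternatively, once coercivity is known, symmetry of $\blfTot_n$ makes it an inner product on the finite-dimensional $U_h^n$ and existence/uniqueness is immediate.)

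Continuity of $\blfTot_n$ and of $F_n$ follows exactly as in~\cref{lem:blf}, via Cauchy--Schwarz, triangle inequality, integration by parts of $\ip{u}{-\div\ttau}$, and Friedrichs' inequality. The only new ingredient relative to~\cref{lem:blf} is the bounded term $\Amat^{-1/2}\bbeta u$ inside the first-order residual, which is trivially controlled by $\norm{u}{}\lesssim\norm{\nabla u}{}$ because $\bbeta\in L^\infty(\Omega)^d$.

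The main work is coercivity. I would observe that $\blfTot_n(\uu,\uu)$ equals the least-squares functional $J_n(\uu;0,0)$ naturally associated with the alternative first-order system~\cref{eq:fo:alt}. Using $u\in H_0^1(\Omega)$ to integrate $\ip{u}{-\div\ssigma}=\ip{\nabla u}{\ssigma}$ by parts, the cross terms $\pm 2\ip{\nabla u}{\ssigma}$ that appear in $2\ip{u}{-\div\ssigma+\gamma u}$ and in the expansion of the squared residual cancel, and a short computation gives
\begin{align*}
  \blfTot_n(\uu,\uu) = \tfrac{1}{\ts_n}\norm{u}{}^2 + \ts_n\norm{-\div\ssigma+\gamma u}{}^2 + \norm{\Amat^{1/2}\nabla u}{}^2 + \norm{\Amat^{-1/2}(\ssigma+\bbeta u)}{}^2 + 2\ip{(\tfrac12\div\bbeta+\gamma)u}{u}.
\end{align*}
The structural observation here is that the nonsymmetric convection cross term $2\ip{\Amat^{-1}\ssigma}{\bbeta u}$ hidden inside the squared residual combines with $\norm{\Amat^{-1/2}\ssigma}{}^2+\norm{\Amat^{-1/2}\bbeta u}{}^2$ into the perfect square $\norm{\Amat^{-1/2}(\ssigma+\bbeta u)}{}^2$, while the remaining lower-order contributions $2\ip{\gamma u}{u}-2\ip{\nabla u}{\bbeta u}$ rearrange, after another integration by parts, into $2\ip{(\tfrac12\div\bbeta+\gamma)u}{u}\geq 0$ by~\cref{eq:coeff}. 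From this identity we extract $\norm{\nabla u}{}^2\lesssim\norm{\Amat^{1/2}\nabla u}{}^2$ by ellipticity of $\Amat$, $\norm{\ssigma}{}^2\lesssim \norm{\Amat^{-1/2}(\ssigma+\bbeta u)}{}^2+\norm{u}{}^2$ by the triangle inequality with $\bbeta\in L^\infty$, and $\ts_n\norm{\div\ssigma}{}^2\lesssim \ts_n\norm{-\div\ssigma+\gamma u}{}^2 + T\norm{u}{}^2$; all $\norm{u}{}^2$ remainders are absorbed by $\norm{\nabla u}{}^2$ via Friedrichs. This yields $\blfTot_n(\uu,\uu)\gtrsim\norm{\uu}{\ts_n}^2$ with constants independent of $\ts_n\in(0,T]$.

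The main (but brief) obstacle is exactly the coercivity step: the naive Young's inequality bound on $2\ip{\Amat^{-1}\ssigma}{\bbeta u}$ would overwhelm the available $\norm{\Amat^{-1/2}\ssigma}{}^2$, so recognizing the perfect-square structure $\norm{\Amat^{-1/2}(\ssigma+\bbeta u)}{}^2$---the analogue here of the cancellation of $\pm 2\ip{\nabla u}{\ssigma}$ used in~\cref{lem:blf}---is essential.
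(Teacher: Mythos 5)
Your proposal is correct, and it follows the same overall framework as the paper: the theorem itself is reduced (via Lax--Milgram on the closed subspace $U_h^n$, or equivalently the least-squares minimization argument) to boundedness and $\ts$-uniform coercivity of the redefined $\blfTot_n$, which the paper establishes in the lemma immediately following the theorem. The one place where you genuinely diverge is the coercivity computation. The paper expands $2\ip{u}{-\div\ssigma+\gamma u}+\norm{\Amat^{-1/2}\ssigma-\Amat^{1/2}\nabla u+\Amat^{-1/2}\bbeta u}{}^2$ to the same point you do --- obtaining $2\ip{-\bbeta\cdot\nabla u+\gamma u}{u}+\norm{\Amat^{-1/2}\ssigma}{}^2+\norm{\Amat^{1/2}\nabla u}{}^2+2\ip{\Amat^{-1/2}\ssigma}{\Amat^{-1/2}\bbeta u}+\norm{\Amat^{-1/2}\bbeta u}{}^2$ --- but then, instead of completing the square, it controls the cross term by a two-parameter Young inequality: the penalty $\varepsilon^{-1}\norm{\Amat^{-1/2}\bbeta u}{}^2$ is split into a fraction $\mu$ absorbed by $\norm{\Amat^{1/2}\nabla u}{}^2$ via Friedrichs and a fraction $1-\mu$ absorbed by $\norm{\Amat^{-1/2}\bbeta u}{}^2$, with $\mu,\varepsilon$ chosen so that $C\mu<1-\mu<\varepsilon<1$. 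You instead recognize the perfect square $\norm{\Amat^{-1/2}(\ssigma+\bbeta u)}{}^2$ and the nonnegative remainder $2\ip{(\tfrac12\div\bbeta+\gamma)u}{u}$, recovering $\norm{\ssigma}{}$ afterwards by the triangle inequality plus Friedrichs; your exact identity for $\blfTot_n(\uu,\uu)$ checks out and is arguably cleaner, avoiding the parameter juggling. One small caveat: your remark that a Young-type bound on the cross term ``would overwhelm'' the available $\norm{\Amat^{-1/2}\ssigma}{}^2$ overstates the obstruction --- the paper's parameter-split version of exactly that bound does succeed --- so the perfect-square structure is convenient rather than essential. Both routes yield constants independent of $\ts\in(0,T]$, which is the point of the exercise.
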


\subsection{Elliptic projection operator}
In this section, we set $\ts_n:=\ts\in(0,T]$ and skip the indices in the bilinear forms.
We define the elliptic projection operator $\ep_h : U \to U_h$ with respect to the elliptic part of the equations, which
is given by the bilinear form $\blfns$, i.e.,
\begin{align}\label{eq:def:ep:alt}
  \blfns(\ep_h\uu,\vv_h) = \blfns(\uu,\vv_h) \quad\text{for all } \vv_h\in U_h.
\end{align}

\begin{lemma}
  \cref{lem:blf} holds true for the bilinear forms defined in~\cref{eq:def:blf:alt}.
\end{lemma}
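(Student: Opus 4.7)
The plan is to follow the structure of the proof of~\cref{lem:blf}, the new feature being that the convective term $\bbeta u$ now enters through the flux equation rather than the scalar equation.

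For \textbf{boundedness} of $\blfns$ I would apply Cauchy--Schwarz together with the triangle and Friedrichs inequalities to each of the three summands, controlling the $\ts$-scaled product via $\ts^{1/2}\norm{-\div\ssigma+\gamma u}{} \lesssim \ts^{1/2}\norm{\div\ssigma}{} + \norm{\nabla u}{}$ (using $\ts \leq T$ and $\norm{u}{} \leq \CF\norm{\nabla u}{}$), and bounding the first-order residual scalar product termwise. For $\blf$, the additional term $\ip{u}{-\div\ttau+\gamma v}$ is integrated by parts to $\ip{\nabla u}{\ttau} + \ip{\gamma u}{v}$, which is directly controlled by $\norm{\uu}\ts\norm{\vv}\ts$.

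For \textbf{coercivity} of $\blf$, the key computation proceeds as follows. Integration by parts in $2\ip{u}{-\div\ssigma+\gamma u}$ yields $2\ip{\nabla u}{\ssigma} + 2\ip{\gamma u}{u}$; the cross term $-2\ip{\nabla u}{\ssigma}$ produced by expanding the squared first-order residual $\norm{\Amat^{-1/2}\ssigma-\Amat^{1/2}\nabla u+\Amat^{-1/2}\bbeta u}{}^2$ cancels with $+2\ip{\nabla u}{\ssigma}$. The remaining cross term is handled by a second integration by parts, $-2\ip{\nabla u}{\bbeta u} = \ip{\div\bbeta}{u^2}$, which combined with $2\ip{\gamma u}{u}$ produces $2\ip{(\gamma+\tfrac12\div\bbeta)}{u^2} \geq 0$ by~\cref{eq:coeff}. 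The remaining quadratic terms form the perfect square $\norm{\Amat^{-1/2}(\ssigma+\bbeta u)}{}^2$, yielding the clean lower bound
\begin{align*}
\blf(\uu,\uu) \geq \norm{\Amat^{1/2}\nabla u}{}^2 + \norm{\Amat^{-1/2}(\ssigma+\bbeta u)}{}^2 + \ts\norm{-\div\ssigma+\gamma u}{}^2.
\end{align*}

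To convert this into a bound on $\norm{\uu}\ts^2$ I would apply Young's inequality with $\eps\in(0,1)$ to obtain $\norm{\Amat^{-1/2}(\ssigma+\bbeta u)}{}^2 \gtrsim (1-\eps)\norm{\ssigma}{}^2 - (\eps^{-1}-1)\|\bbeta\|_\infty^2\CF^2\norm{\nabla u}{}^2$ (absorbing $\norm{\bbeta u}{}$ via Friedrichs); choosing $\eps$ close enough to $1$ ensures that the net coefficient of $\norm{\nabla u}{}^2$, together with $\alpha_0$ from $\norm{\Amat^{1/2}\nabla u}{}^2$, remains strictly positive. The upper bound $\norm{\uu}\ts^2 \lesssim \norm{\nabla u}{}^2 + \norm{\ssigma}{}^2 + \ts\norm{-\div\ssigma+\gamma u}{}^2$ then follows from $\ts\norm{\div\ssigma}{}^2 \leq 2\ts\norm{-\div\ssigma+\gamma u}{}^2 + 2T\|\gamma\|_\infty^2\CF^2\norm{\nabla u}{}^2$, and chaining the two inequalities proves coercivity of $\blf$. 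Coercivity of $\blfns$ follows as in the original version from $2\blfns(\uu,\uu) - \blf(\uu,\uu) = \ts\norm{-\div\ssigma+\gamma u}{}^2 + \norm{\Amat^{-1/2}\ssigma-\Amat^{1/2}\nabla u+\Amat^{-1/2}\bbeta u}{}^2 \geq 0$, whence $\blfns(\uu,\uu) \geq \tfrac12 \blf(\uu,\uu)$.

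The \textbf{main obstacle} is the treatment of the two new cross terms in the first-order residual expansion: $2\ip{\Amat^{-1}\ssigma}{\bbeta u}$ has to be absorbed into the perfect square with $\norm{\Amat^{-1/2}\ssigma}{}^2$ and $\norm{\Amat^{-1/2}\bbeta u}{}^2$, while $-2\ip{\nabla u}{\bbeta u}$ must be converted into a reaction-type contribution by a second integration by parts and then merged with $2\ip{\gamma u}{u}$ to exploit the sign condition~\cref{eq:coeff}. The subsequent Young-inequality step is delicate: $\eps$ must stay strictly below $1$ to keep $\norm{\ssigma}{}^2$ positive and simultaneously close enough to $1$ so that the Friedrichs absorption preserves positivity of the $\norm{\nabla u}{}^2$-coefficient, which is achievable unconditionally since the critical threshold on $\eps$ is strictly less than $1$.
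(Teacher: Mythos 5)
Your proposal is correct and follows the paper's overall strategy: boundedness by the same integration-by-parts arguments as in the original \cref{lem:blf}; coercivity of $\blf$ by expanding $2\ip{u}{-\div\ssigma+\gamma u}+\norm{\Amat^{-1/2}\ssigma-\Amat^{1/2}\nabla u+\Amat^{-1/2}\bbeta u}{}^2$, cancelling $\pm2\ip{\ssigma}{\nabla u}$, converting $-2\ip{\nabla u}{\bbeta u}$ into $\ip{(\div\bbeta)u}{u}$ so that \cref{eq:coeff} applies; and finally $\blfns(\uu,\uu)\geq\tfrac12\blf(\uu,\uu)$. The one place where you genuinely reorganize the argument is the treatment of the remaining cross term $2\ip{\Amat^{-1}\ssigma}{\bbeta u}$: you complete the square to obtain the clean intermediate bound $\blf(\uu,\uu)\geq\norm{\Amat^{1/2}\nabla u}{}^2+\norm{\Amat^{-1/2}(\ssigma+\bbeta u)}{}^2+\ts\norm{-\div\ssigma+\gamma u}{}^2$ and then apply a single reverse Young inequality with parameter $\eps$ close to $1$, absorbing $\norm{\bbeta u}{}$ into $\norm{\nabla u}{}$ by Friedrichs. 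The paper instead keeps the three terms $\norm{\Amat^{-1/2}\ssigma}{}^2+2\ip{\Amat^{-1/2}\ssigma}{\Amat^{-1/2}\bbeta u}+\norm{\Amat^{-1/2}\bbeta u}{}^2$ separate and splits the Young bound $\varepsilon^{-1}\norm{\Amat^{-1/2}\bbeta u}{}^2$ into a $\mu$-fraction converted via Friedrichs and a $(1-\mu)$-fraction absorbed by the $+\norm{\Amat^{-1/2}\bbeta u}{}^2$ already present, choosing $C\mu<1-\mu<\varepsilon<1$. The two parameter choices are equivalent in substance (both hinge on the Young parameter sitting strictly between a threshold and $1$), but your perfect-square identity is the more transparent of the two. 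One small point of care for the final write-up: in the boundedness of $\blfns$ the first summand $\ip{-\div\ssigma+\gamma u}{v}$ must also be integrated by parts (to $\ip{\ssigma}{\nabla v}+\ip{\gamma u}{v}$), since $\norm{\cdot}\ts$ only controls $\ts^{1/2}\norm{\div\ssigma}{}$; you state this trick explicitly only for the corresponding term of $\blf$.
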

\begin{proof}
  Boundedness with respect to the norm $\norm\cdot{\ts}$ follows with the same argumentation as in the proof
  of~\cref{lem:blf}.

  It remains to prove coercivity. We show the coercivity of $\blf(\cdot,\cdot)$ only. Coercivity of
  $\blfns(\cdot,\cdot)$ then follows since $\blfns(\uu,\uu)\geq \tfrac12\blf(\uu,\uu)$.

  Observe that
  \begin{align*}
    &2\ip{u}{-\div\ssigma+\gamma u} + \norm{\Amat^{-1/2}\ssigma-\Amat^{1/2}\nabla u+\Amat^{-1/2}\bbeta u}{}^2 \\
    &= 2\ip{\ssigma}{\nabla u} + 2\ip{\gamma u}{u} + \norm{\Amat^{-1/2}\ssigma-\Amat^{1/2}\nabla u}{}^2
    + 2\ip{\Amat^{-1/2}\ssigma-\Amat^{1/2}\nabla u}{\Amat^{-1/2}\bbeta u} + \norm{\Amat^{-1/2}\bbeta u}{}^2 \\
    &= 2\ip{-\bbeta\cdot\nabla u+\gamma u}u + \norm{\Amat^{-1/2}\ssigma}{}^2 
    + \norm{\Amat^{1/2}\nabla u}{}^2 + 2\ip{\Amat^{-1/2}\ssigma}{\Amat^{-1/2}\bbeta u} + \norm{\Amat^{-1/2}\bbeta u}{}^2
  \end{align*}
  Let $1\geq \mu>0$. Young's inequality with parameter $\varepsilon>0$ leads us to 
  \begin{align*}
    2|\ip{\Amat^{-1/2}\ssigma}{\Amat^{-1/2}\bbeta u}| &\leq \varepsilon\norm{\Amat^{-1/2}\ssigma}{}^2
    + \varepsilon^{-1}\norm{\Amat^{-1/2}\bbeta u}{}^2 \\
    &= \varepsilon\norm{\Amat^{-1/2}\ssigma}{}^2 + \varepsilon^{-1}\mu\norm{\Amat^{-1/2}\bbeta u}{}^2 +
    \varepsilon^{-1}(1-\mu)\norm{\Amat^{-1/2}\bbeta u}{}^2   \\
    &\leq \varepsilon\norm{\Amat^{-1/2}\ssigma}{}^2 + C\varepsilon^{-1}\mu\norm{\Amat^{1/2}\nabla u}{}^2 +
    \varepsilon^{-1}(1-\mu)\norm{\Amat^{-1/2}\bbeta u}{}^2,
  \end{align*}
where $C>0$ depends on $\Amat$, $\bbeta$ and $\CF$.
  Together with $\ip{-\bbeta\cdot\nabla u+\gamma u}u\geq 0$ this further shows
  \begin{align*}
    &2\ip{u}{-\div\ssigma+\gamma u} + \norm{\Amat^{-1/2}\ssigma-\Amat^{1/2}\nabla u+\Amat^{-1/2}\bbeta u}{}^2 \\
    &\qquad \geq (1-\varepsilon)\norm{\Amat^{-1/2}\ssigma}{}^2 
    + (1-C\varepsilon^{-1}\mu)\norm{\Amat^{1/2}\nabla u}{}^2 + (1-\varepsilon^{-1}(1-\mu))\norm{\Amat^{-1/2}\bbeta u}{}^2.
  \end{align*}
  Choose $\mu\in(0,1]$ sufficiently small such that $C\mu<1-\mu$ and choose $\varepsilon$ with $1-\mu<\varepsilon<1$.
  This implies that
  \begin{align*}
    1-\varepsilon>0, \quad 1-\varepsilon^{-1}\mu C > 0, \quad\text{and } 1-\varepsilon^{-1}(1-\mu)>0,
  \end{align*}
  hence,
  \begin{align*}
    &2\ip{u}{-\div\ssigma+\gamma u} + \norm{\Amat^{-1/2}\ssigma-\Amat^{1/2}\nabla u+\Amat^{-1/2}\bbeta u}{}^2
    \gtrsim  \norm{\Amat^{-1/2}\ssigma}{}^2 + \norm{\Amat^{1/2}\nabla u}{}^2 + \norm{\Amat^{-1/2}\bbeta u}{}^2.
  \end{align*}
  Finally, with the triangle inequality we infer that
  \begin{align*}
    \norm{\uu}\ts^2 \lesssim \norm{\Amat^{-1/2}\ssigma}{}^2 + \norm{\Amat^{1/2}\nabla u}{}^2 + \norm{\Amat^{-1/2}\bbeta u}{}^2 
    + \ts \norm{-\div\ssigma+\gamma u}{}^2
    \lesssim \blf(\uu,\uu),
  \end{align*}
  which finishes the proof.
\end{proof}

\begin{theorem}
  \cref{thm:ep} holds for the operator defined in~\cref{eq:def:ep:alt}.
\end{theorem}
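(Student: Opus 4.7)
The proof mirrors that of Theorem~\ref{thm:ep}. Well-definedness of $\ep_h\uu$ and the quasi-optimality bound $\norm{\uu-\ep_h\uu}\ts \leq C_{\mathrm{opt}} \inf_{\vv_h \in U_h} \norm{\uu-\vv_h}\ts$ follow immediately from Lax--Milgram applied with the coercivity and boundedness of $\blfns$ on $(U,\norm\cdot\ts)$ established in the preceding lemma.

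For the $L^2$ bound I would carry out analogues of the five steps of the proof of Theorem~\ref{thm:ep}, adapted to the altered first-order structure. Put $\uu_d = (u_d,\ssigma_d) := \uu - \ep_h\uu$ and note that the formal $L^2$-adjoint of the new elliptic operator $-\div(\Amat\nabla u - \bbeta u) + \gamma u$ is the \emph{old} primal operator. Accordingly, I would define $w \in H_0^1(\Omega)$ by $-\div\Amat\nabla w - \bbeta\cdot\nabla w + \gamma w = u_d$, which by~\cref{eq:regularity:a} satisfies $\norm{w}{H^2(\Omega)} \lesssim \norm{u_d}{}$. Testing with $u_d$, integrating by parts, and cancelling $\ip{\bbeta u_d}{\nabla w} + \ip{\div(\bbeta u_d)}{w} = 0$ yields the identity
\begin{align*}
\norm{u_d}{}^2 = \ip{-\div\ssigma_d + \gamma u_d}{w} - \ip{\Amat^{-1/2}\ssigma_d - \Amat^{1/2}\nabla u_d + \Amat^{-1/2}\bbeta u_d}{\Amat^{1/2}\nabla w},
\end{align*}
so that the two first-order residuals appearing in $\blfns$ show up on the right-hand side.

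Next I would construct $(v,\ttau) \in U$ with $\norm{u_d}{}^2 = \blfns(\uu_d,(v,\ttau))$ via the ansatz $v = w + \tilde v$, $\ttau := \Amat\nabla\tilde v - \bbeta v$, choosing $\tilde v \in H_0^1(\Omega)$ so that $v + \ts(-\div\ttau + \gamma v) = w$. By construction $\Amat^{-1/2}\ttau - \Amat^{1/2}\nabla v + \Amat^{-1/2}\bbeta v = -\Amat^{1/2}\nabla w$ and $\div\ttau = \gamma v + \tilde v/\ts$; the condition on $\tilde v$ is equivalent to the auxiliary equation
\begin{align*}
-\div\Amat\nabla\tilde v + \bbeta\cdot\nabla\tilde v + (\gamma+\div\bbeta)\tilde v + \tfrac{1}{\ts}\tilde v = -\bbeta\cdot\nabla w - (\gamma+\div\bbeta) w,
\end{align*}
uniquely solvable since $\gamma + \tfrac12\div\bbeta \geq 0$. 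Testing with $\tilde v$ and exploiting the cancellation $\ip{\bbeta\cdot\nabla\tilde v}{\tilde v} = -\tfrac12\ip{\div\bbeta}{\tilde v^2}$ delivers the $\ts$-robust bounds $\norm{\nabla\tilde v}{} \lesssim \ts^{1/2}\norm{u_d}{}$ and $\norm{\tilde v}{} \lesssim \ts\norm{u_d}{}$; rearranging the PDE and invoking~\cref{eq:regularity:a} then yields $\norm{\tilde v}{H^2(\Omega)} \lesssim \norm{u_d}{}$, whence $\norm{v}{H^2(\Omega)} + \norm{\ttau}{\HH^1(\Omega)} \lesssim \norm{u_d}{}$. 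The explicit formula $\div\ttau = \gamma v + \tilde v/\ts$ and $\gamma \in C^1(\TT)$ furthermore give $\ts^{1/2}\norm{\div(1-I^{\rm RT}_{h,0})\ttau}{} \lesssim h\norm{u_d}{}$ via the commutativity property.

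The argument concludes as in Step~5 of Theorem~\ref{thm:ep}: Galerkin orthogonality $\blfns(\uu_d,\vv_h) = 0$ and boundedness of $\blfns$ give $\norm{u-\ep_h^\nabla\uu}{}^2 = \blfns(\uu_d, (v,\ttau)-\vv_h) \lesssim \norm{\uu_d}\ts \norm{(v,\ttau)-\vv_h}\ts$ for arbitrary $\vv_h \in U_h$, and choosing $\vv_h = (I^{\rm SZ}_{h,1}v, I^{\rm RT}_{h,0}\ttau)$ and combining the regularity bounds above with the approximation properties from~\cref{sec:discrete} yields $\norm{(v,\ttau)-\vv_h}\ts \lesssim h\norm{u_d}{}$, hence the claim. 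The central difficulty is identifying the correct formula $\ttau = \Amat\nabla\tilde v - \bbeta v$: because the new constitutive residual contains the extra zeroth-order term $\Amat^{-1/2}\bbeta u$, both the auxiliary PDE for $\tilde v$ (a convection-type rather than reaction-type operator) and the expression for $\div\ttau$ differ from their counterparts in Theorem~\ref{thm:ep}, and one must check that these changes remain compatible with the $\ts$-uniform regularity estimates driving Steps~3 and~4.
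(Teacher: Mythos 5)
Your proposal is correct and follows essentially the same route as the paper: the same dual problem for $w$ (the adjoint of the new elliptic operator being the old primal one), the same duality identity, and the same construction of $(v,\ttau)$ via $v=w+\tilde v$, $\ttau=\Amat\nabla\tilde v-\bbeta v$, which is exactly equivalent to the paper's two defining equations. You merely spell out the adapted Steps~3--5 that the paper compresses into ``following the same argumentation as in the proof of Theorem~\ref{thm:ep}''.
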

\begin{proof}
  It suffices to develop duality arguments.
  Let $\uu_d =(u_d,\ssigma_d) = \uu-\ep_h\uu$, where $\ep_h\uu$ is defined in~\cref{eq:def:ep:alt}.
  Define $w\in H_0^1(\Omega)$ as the solution of
  \begin{align*}
    -\div\Amat\nabla w -\bbeta\cdot\nabla w +\gamma w = u_d.
  \end{align*}
  Then,
  \begin{align*}
    \norm{u_d}{}^2 &= \ip{u_d}{-\div\Amat\nabla w -\bbeta\cdot\nabla w +\gamma w} 
    = \ip{\Amat\nabla u_d -\bbeta u_d}{\nabla w} + \ip{\gamma u_d}{w}.
  \end{align*}
  Adding $-\ip{\ssigma_d}{\nabla w} - \ip{\div\ssigma_d}w=0$ we infer
  \begin{align*}
    \norm{u_d}{}^2 &= \ip{\Amat^{1/2}\nabla u_d -\Amat^{-1/2}\bbeta u_d-\Amat^{-1/2}\ssigma_d}{\Amat^{1/2}\nabla w}
    + \ts \ip{-\div\ssigma_d+\gamma u_d}{\frac{w}\ts}.
  \end{align*}
  Define $(v,\ttau)\in U$ via
  \begin{align*}
    -\div\ttau +\gamma v + \frac{v}\ts &= \frac{w}\ts, \\
    \Amat^{1/2}\nabla v - \Amat^{-1/2}\bbeta v -\Amat^{-1/2}\ttau &= \Amat^{1/2}\nabla w.
  \end{align*}
  Therefore, $v\in H_0^1(\Omega)$ satisfies
  \begin{align*}
    -\div\Amat\nabla v + \div(\bbeta v) + \gamma v + \frac{v}\ts &= -\div\Amat\nabla w + \frac{w}\ts.
  \end{align*}
  Following the same argumentation as in the proof of~\cref{thm:ep} (Step~3--Step~5) we conclude the proof.
\end{proof}

\begin{corollary}
  \cref{cor:ep:rates} holds for the operator defined in~\cref{eq:def:ep:alt}. \qed
\end{corollary}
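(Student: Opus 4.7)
The plan is to follow line-by-line the original derivation of \cref{cor:ep:rates}, noting that nothing in that proof depended on the specific form of the bilinear form beyond the two statements \eqref{eq:ep:opt} and \eqref{eq:ep:duality} of \cref{thm:ep}, both of which have just been re-established for the new operator.

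Concretely, first I would invoke the analog of \eqref{eq:ep:opt} for the operator defined in \eqref{eq:def:ep:alt}, which gives
\begin{align*}
  \norm{\uu-\ep_h\uu}\ts \leq C_\mathrm{opt} \inf_{\vv_h\in U_h} \norm{\uu-\vv_h}\ts.
\end{align*}
Then I would apply \cref{prop:approx}, whose statement concerns only the approximation theory of the discrete space $U_h = \cS_0^{p+1}(\TT) \times \RT^p(\TT)$ and is therefore independent of whichever elliptic projection one considers. This yields the first claimed bound
\begin{align*}
  \norm{\uu-\ep_h\uu}\ts \leq C h^{p+1}(\norm{u}{H^{p+2}(\Omega)} + \norm{\ssigma}{\HH^{p+1}(\Omega)} + \ts^{1/2}\norm{\div\ssigma}{H^{p+1}(\TT)}).
\end{align*}

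For the $L^2$ estimate under the additional regularity assumption \eqref{eq:regularity}, I would combine the analog of the duality bound \eqref{eq:ep:duality} with the $H^1$-type estimate just derived:
\begin{align*}
  \norm{u-\ep_h^\nabla \uu}{} \leq C_{L^2} h \norm{\uu-\ep_h\uu}\ts \leq C h^{p+2}(\norm{u}{H^{p+2}(\Omega)} + \norm{\ssigma}{\HH^{p+1}(\Omega)} + \ts^{1/2}\norm{\div\ssigma}{H^{p+1}(\TT)}).
\end{align*}

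There is essentially no obstacle: since we have already verified \cref{lem:blf} and \cref{thm:ep} in the modified setting, and \cref{prop:approx} is a purely discrete interpolation statement, the proof amounts to concatenating these three ingredients exactly as in the original derivation of \cref{cor:ep:rates}. The only point requiring a moment's care is to confirm that the duality argument in the new \cref{thm:ep} does not introduce any extra hypothesis on $\uu$; it does not, since it is formulated in terms of $\norm{\uu-\ep_h\uu}\ts$ alone.
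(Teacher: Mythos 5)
Your proposal is correct and matches the paper's (implicit) argument exactly: the paper treats this corollary as an immediate consequence of the newly established analog of \cref{thm:ep} together with \cref{prop:approx}, which is precisely the concatenation you carry out. Nothing further is needed.
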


\subsection{Error estimates}
The main results from~\cref{sec:apriori} hold true for the present situation:
\begin{theorem}\label{thm:euler:alt:main}
  \cref{thm:stability:euler,thm:euler:L2,thm:euler:energy} hold for solutions of~\cref{eq:euler:alt}.
\end{theorem}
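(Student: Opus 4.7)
The strategy is to reuse the architecture of the proofs of \cref{thm:stability:euler,thm:euler:L2,thm:euler:energy} verbatim, relying on the fact that the elliptic projection \cref{eq:def:ep:alt} satisfies the analogues of \cref{lem:blf}, \cref{thm:ep}, and \cref{cor:ep:rates} that have just been established. What must be re-verified is that (a) every algebraic manipulation that was performed with the old bilinear forms has a counterpart for the new ones, and (b) the ``testing identities'' which produced coercivity estimates still go through despite the fact that the convection term now sits inside the second equation of the first-order system.

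For the stability analogue, I would apply Cauchy--Schwarz to the new functional $F_n(\vv;f^n,u_h^{n-1})$ in \cref{eq:def:blf:alt}, bound $\ts_n^{1/2}\|\tfrac{v_h}{\ts_n} - \div\ttau_h + \gamma v_h\|$ by $\blfTot_n(\vv_h,\vv_h)^{1/2}$ using the triangle inequality and the presence of $\tfrac1{\ts_n}\|v\|^2$ and $\ts_n\|-\div\ssigma+\gamma u\|^2$ inside $\blfTot_n$, then test with $\vv_h = \uu_h^n$ and use the new coercivity from the previous lemma. The iteration step is identical.

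For the $L^2$ estimate, I would repeat the splitting $u^n - u_h^n = (u^n - \ep_h^\nabla\uu^n) + (\ep_h^\nabla\uu^n - u_h^n)$. Using \cref{eq:def:blf:alt} and the definition of $\ep_h$, the exact solution satisfies $\blfns_n(\uu^n,\vv) = \ip{f^n-\dot u^n}{v+\ts_n(-\div\ttau+\gamma v)}$, so the derivation of the error identity
\begin{align*}
\tfrac1{\ts_n}\ip{\ep_h^\nabla\uu^n - u_h^n}{v_h} + \blf_n(\ep_h\uu^n - \uu_h^n,\vv_h)
  = \ip{\ep_h^\nabla\uu^n - u_h^{n-1} - \ts_n\dot u^n}{\tfrac{v_h}{\ts_n}-\div\ttau_h+\gamma v_h}
\end{align*}
is a formal copy of Step~2 in the proof of \cref{thm:euler:L2}, with $-\bbeta\cdot\nabla v_h$ simply dropped. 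Testing with $\vv_h = \ep_h\uu^n - \uu_h^n$, splitting $\ep_h^\nabla\uu^n - u_h^{n-1} - \ts_n\dot u^n$ into the usual three pieces, applying \cref{cor:ep:rates} for the elliptic part and Taylor remainder for the time part, followed by the assumption \cref{eq:choice:initDataL2} on $u_h^0$, yields the assertion.

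The energy-norm estimate is where the main obstacle lies, and it will require the most care. Following the proof of \cref{thm:euler:energy}, I would test the error equations separately with $\vv_h=(v_h,0)$ and $\vv_h=(0,\ttau_h)$. When $\vv_h=(v_h,0)$ is substituted into $\blf_n(\ww^n,\vv_h)$, the cross terms involving $\Amat^{-1/2}\bbeta u$ in the third summand of $\blfns_n$ now produce contributions of the form $\ip{\Amat^{-1/2}\bbeta w^n}{-\Amat^{1/2}\nabla v_h + \Amat^{-1/2}\bbeta v_h}$, which are not automatically absorbed. The task is to identify a symmetric coercive bilinear form $c(\cdot,\cdot)$ on $H_0^1(\Omega)$ that plays the role of the one used in the original proof; a natural candidate is obtained by retaining on the left-hand side precisely the symmetric terms whose coercivity was verified in the proof of the new \cref{lem:blf} (namely $\norm{\Amat^{1/2}\nabla v}{}^2$, $\norm{\Amat^{-1/2}\bbeta v}{}^2$, plus a $\ts_n$-scaled $\gamma$ contribution), and moving everything else to the right-hand side to be controlled by Young's inequality. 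The hard point is that the parameter $\mu\in(0,1]$ that balanced $\bbeta$ against $\Amat^{1/2}\nabla u$ in the coercivity proof must be compatible with Young's inequality here as well, so one must track constants carefully to ensure the absorbed terms leave a strictly positive multiple of $\norm{\nabla v}{}^2$. Once $c(\cdot,\cdot)$ is pinned down, the rest of the proof transfers: test with $v_h=w^n-w^{n-1}$, use the telescoping identity $c(w,w-v)=\tfrac12(\enorm{w}^2-\enorm{v}^2+\enorm{w-v}^2)$, bound $\ts_n^2\|\div\cchi^n\|^2$ by an auxiliary equation obtained from testing with $(0,\cchi^n)$, apply \cref{lem:gronwall}, estimate $e_h^n$ via $\cref{cor:ep:rates}$ and Taylor expansion as in Steps~7--8 of \cref{thm:euler:energy}, and conclude using \cref{eq:choice:initDataEnergy}.
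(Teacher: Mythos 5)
Your plan for the stability and $L^2$ parts matches the paper, which itself only remarks that those proofs carry over verbatim; your error identity for the alternative system is the correct one. The substance of the paper's proof is entirely in the energy estimate, and there your proposal has a genuine gap in the construction of the bilinear form $c(\cdot,\cdot)$.

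You propose to keep in $c$ only the terms $\norm{\Amat^{1/2}\nabla v}{}^2$, $\norm{\Amat^{-1/2}\bbeta v}{}^2$ and a $\ts$-scaled $\gamma$ contribution, and to move ``everything else'', in particular the convection cross terms, to the right-hand side to be handled by Young's inequality. The paper does the opposite, and for a reason: expanding $\blf(\ww^n,(v_h,0))$ by integration by parts produces, besides the $\cchi^n$-dependent remainders $-\ip{\Amat^{-1}\cchi^n}{\bbeta v_h}$ and $\ts\ip{\div\cchi^n}{\gamma v_h}$, the pair $-\ip{\nabla w^n}{\bbeta v_h}-\ip{\bbeta w^n}{\nabla v_h}$, which is \emph{symmetric} in its two arguments and therefore belongs inside $c$. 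With it included, $c(u,u)=\ip{(2\gamma+\div\bbeta)u}{u}+\ts\norm{\gamma u}{}^2+\norm{\Amat^{1/2}\nabla u}{}^2+\norm{\Amat^{-1/2}\bbeta u}{}^2\geq\norm{\Amat^{1/2}\nabla u}{}^2$, using $-2\ip{\nabla u}{\bbeta u}=\ip{(\div\bbeta)u}{u}$ and the standing assumption \cref{eq:coeff}; no balancing parameter $\mu$ is needed (your concern about $\mu$ pertains to the coercivity of $\blf$, not of $c$). If instead you move $\ip{\bbeta w^n}{\nabla v_h}$ to the right-hand side with $v_h=w^n-w^{n-1}$, Young's inequality forces you to pay either $\varepsilon\norm{\nabla v_h}{}^2$ against $\varepsilon^{-1}\norm{\bbeta w^n}{}^2$, or $\tfrac{\varepsilon}{\ts}\norm{v_h}{}^2$ against $\varepsilon^{-1}\ts\norm{\nabla w^n}{}^2$; in both cases the leftover is a quantity of size $\norm{\nabla w^{n-1}}{}^2$ (or $\norm{\nabla w^n}{}^2$) \emph{without} a factor $\ts$, so the telescoped recursion feeds $\sum_j\norm{\nabla w^j}{}^2$ rather than $\sum_j\ts\norm{\nabla w^j}{}^2$ into \cref{lem:gronwall}, and the Gronwall factor degenerates like $e^{CT/\ts}$. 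One can rescue your route by first splitting $w^n=w^{n-1}+v_h$ and integrating by parts again, at the price of extra terms and a time-step restriction, but as written the argument does not close. A second, smaller omission: the leftover $-\ip{\Amat^{-1}\cchi^n}{\bbeta v_h}$ means the auxiliary test with $(0,\ttau_h)$ must now control $\ts\norm{\cchi^n}{}^2$ as well, not only $\ts^2\norm{\div\cchi^n}{}^2$ as in the original proof; the paper establishes $\norm{\cchi^n}{}^2+\ts\norm{\div\cchi^n}{}^2\lesssim\frac1\ts\norm{e_h^n}{}^2+\norm{\nabla w^{n-1}}{}^2$ for this purpose.
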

\begin{proof}
  Following the same lines as in the proof of~\cref{thm:stability:euler,thm:euler:L2} we conclude the
  analogous results for solutions of~\cref{eq:euler:alt}.

  For the proof of the analogous result of~\cref{thm:euler:energy} we need some minor modifications
  of the proof of~\cref{thm:euler:energy}, but the main ideas are the same:
  With the same notation as in the proof of~\cref{thm:euler:energy}, the error equations read
  \begin{align}\label{eq:euler:alt:H1:a}
    \frac1\ts \ip{w^n}{v_h} + b(\ww^n,\vv_h) &= \frac1\ts\ip{e_h^n}{v_h+\ts(-\div\ttau_h + \gamma v_h)} 
   + \frac1\ts\ip{w_h^{n-1}}{v_h+\ts(-\div\ttau_h+\gamma v_h)}.
  \end{align}
  Taking $\vv_h = (v_h,0)$ we obtain (using integration by parts)
  \begin{align*}
    b(\ww^n,\vv_h) &= \ip{-\div\cchi^n+\gamma w^n}{v_h} + \ip{w^n}{\gamma v_h} + \ts\ip{-\div\cchi^n+\gamma
    w^n}{\gamma v_h}
    \\ &\qquad + \ip{\Amat^{-1/2}\cchi^n-\Amat^{1/2}\nabla w^n+\Amat^{-1/2}\bbeta
    w^n}{-\Amat^{1/2}\nabla v_h+\Amat^{-1/2}\bbeta v_h}
    \\ &= 2\ip{\gamma w^n}{v_h} + \ts \ip{\gamma^2 w^n}{v_h} + \ip{\Amat\nabla w^n}{\nabla v_h}
    -\ip{\nabla w^n}{\bbeta v_h} - \ip{\bbeta w^n}{\nabla v_h} 
    \\ &\qquad + \ip{\Amat^{-1}\bbeta w^n}{\bbeta v_h} + \ip{\Amat^{-1}\cchi^n}{\bbeta v_h} -\ts
    \ip{\div\cchi^n}{\gamma v_h}
  \end{align*}
  Define the symmetric, bounded and coercive bilinear form $c : H_0^1(\Omega)\times H_0^1(\Omega) \to \R$ by
  \begin{align*}
    c(u,v) := 2\ip{\gamma u}{v} + \ts \ip{\gamma^2 u}{v} + \ip{\Amat\nabla u}{\nabla v}
    -\ip{\nabla u}{\bbeta v} - \ip{\bbeta u}{\nabla v} 
    + \ip{\Amat^{-1}\bbeta u}{\bbeta v}
  \end{align*}
  for $u,v\in H_0^1(\Omega)$. Then,~\cref{eq:euler:alt:H1:a} with $\vv_h = (v_h,0)$ reads
  \begin{align*}
    \frac1\ts \ip{w^n-w^{n-1}}{v_h} + c(w^n,v_h) &= \frac1\ts \ip{e_h^n}{v_h+\ts\gamma v_h}
    + \ip{w^{n-1}}{\gamma v_h} \\
    &\qquad -\ip{\Amat^{-1}\cchi^n}{\bbeta v_h} + \ts \ip{\div\cchi^n}{\gamma v_h}.
  \end{align*}
  With $\enorm{\cdot}^2 := c(\cdot,\cdot)$ and $v_h = w^n-w^{n-1}$
  we conclude with the same arguments as in the proof of~\cref{thm:euler:energy} that
  \begin{align}\label{eq:euler:alt:H1:b}
    \enorm{w^n}^2-\enorm{w^{n-1}}^2 \leq \frac{C_1}\ts \norm{e_h^n}{}^2 + C_2\ts \norm{\nabla w^{n-1}}{}^2 
    + C_3( \ts\norm{\cchi^n}{}^2 +\ts^3\norm{\div\cchi^n}{}^2).
  \end{align}
  In the following we estimate $\ts\norm{\cchi^n}{}^2 +\ts^3\norm{\div\cchi^n}{}^2$. By
  testing~\cref{eq:euler:alt:H1:a} with $\vv_h= (0,\ttau_h)$ we get
  \begin{align*}
    \ip{\Amat^{-1}\cchi^n}{\ttau_h} + \ts\ip{\div\cchi^n}{\div\ttau_h}
    &= \ip{e_h^n}{-\div\ttau_h} + \ip{w^{n-1}}{-\div\ttau_h} 
    \\ 
    &\qquad -\ip{\Amat^{-1}\bbeta w^n}{\ttau_h} 
    +\ts \ip{\gamma w^n}{\div\ttau_h}
    \\ &= \ip{e_h^n}{-\div\ttau_h} + \ip{\nabla w^{n-1}}{\ttau_h} 
    \\ 
    &\qquad -\ip{\Amat^{-1}\bbeta w^n}{\ttau_h} 
    +\ts \ip{\gamma w^n}{\div\ttau_h}.
  \end{align*}
  Then, standard arguments give
  \begin{align*}
    \norm{\cchi^n}{}^2 + \ts \norm{\div\cchi^n}{}^2 \lesssim \frac1\ts\norm{e_h^n}{}^2 + \norm{\nabla
    w^{n-1}}{}^2 + \norm{w^n}{}^2.
  \end{align*}
  Using $\norm{w^n}{}\leq \norm{w^{n-1}}{} + \norm{e_h^n}{} \lesssim \norm{\nabla w^{n-1}}{} + \norm{e_h^n}{}$ we further infer
  \begin{align*}
    \norm{\cchi^n}{}^2 + \ts \norm{\div\cchi^n}{}^2 \lesssim \frac1\ts\norm{e_h^n}{}^2 + \norm{\nabla w^{n-1}}{}^2
  \end{align*}
  and get instead of~\cref{eq:euler:alt:H1:b} the estimate
  \begin{align*}
    \enorm{w^n}^2-\enorm{w^{n-1}}^2 \leq \frac{C_4}\ts \norm{e_h^n}{}^2 + C_5\ts \norm{\nabla w^{n-1}}{}^2.
  \end{align*}
  Finally, the very same argumentation as in the proof of~\cref{thm:euler:energy} yields
  \begin{align*}
    \norm{\nabla w^n}{} \lesssim h^{p+1}+\ts  \quad\text{and also}\quad
    \norm{\ww^n}\ts \lesssim h^{p+1}+\ts.
  \end{align*}
  This finishes the proof.
\end{proof}

\section{Numerical Examples}\label{sec:num}
In this section we present some numerical examples that underpin the theoretical results uncovered in this work. In
particular, we are interested in the convergence rates predicted
by~\cref{thm:euler:L2,thm:euler:energy,thm:euler:alt:main}.

\begin{figure}
  \begin{center}
    \begin{tikzpicture}
\begin{axis}[
width=0.55\textwidth,
    axis equal,
]

\addplot[patch,color=white,
faceted color = black, line width = 1.5pt,
patch table ={figures/elements.dat}] file{figures/coordinates.dat};
\addplot[mark=*,color=gray,only marks] table[x index=0,y index=1] {figures/coordinates.dat};
\end{axis}
\end{tikzpicture}
  \end{center}
  \caption{Initial mesh configuration of $\Omega = (0,1)^2$ with four triangles.}
  \label{fig:Mesh}
\end{figure}
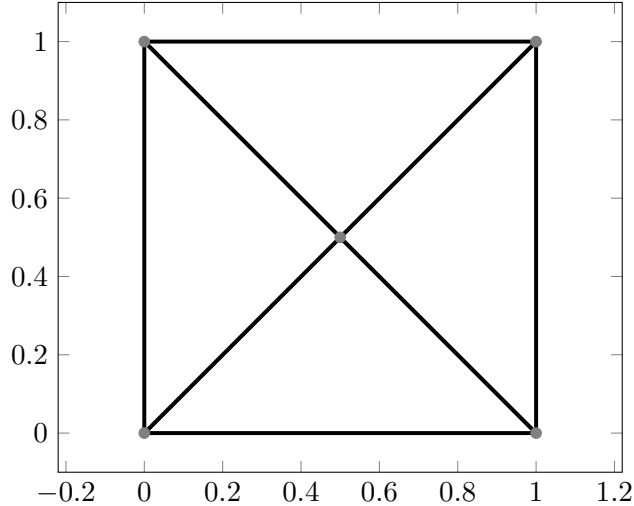

Throughout, we consider the convex domain $\Omega = (0,1)^2$ and the manufactured solution
\begin{align*}
  u(t;x,y) := e^{-2\pi^2 t}\sin(\pi x) \sin(\pi y).
\end{align*}
Note that $u$ is smooth and its trace vanishes on $\Gamma = \partial \Omega$.
Moreover, we choose $\Amat$ to be the identity matrix, $\bbeta(x,y) = (1,1)^T$, $\gamma(x,y) = 0$ for $(x,y)\in \Omega$.
In~\cref{sec:examples:1} we consider the solutions of~\cref{eq:euler}, where the right-hand side data $f$ is given
through~\cref{eq:fo}, i.e., $f := \dot{u}- \Delta u -\bbeta\cdot\nabla u$. Also note that by~\cref{eq:fo} we have $\ssigma = \nabla u$.
In~\cref{sec:examples:2} we consider the solutions of~\cref{eq:euler:alt}, where the right-hand side data $f$ is
given through~\cref{eq:fo:alt}, i.e., $f = \dot{u}-\Delta u + \bbeta\cdot\nabla u$. Furthermore, in this case
$\ssigma = \nabla u -\bbeta u$.

In the following examples we use a lowest-order discretization, i.e., $p=0$. In order to obtain the optimal possible
rates we have to consider two cases: For optimal rates for the scalar variable in the $L^2(\Omega)$ norm we have to
equilibrate the time step $\ts$ and the mesh-size $h$ so that $\ts \simeq h^2$.
If we are only interested in optimal rates for the error in the natural norm $\norm{\cdot}\ts$, it suffices to choose
$\ts \simeq h$.

The problem set-up is quite simple: We choose as end time $T=0.1$.
For all computations we start with an initial triangulation of $\Omega$ into four (similar) 
triangles, see~\cref{fig:Mesh}, and an initial time step $\ts = 0.1$.
We then solve~\cref{eq:euler}, see~\cref{sec:examples:1}, resp.~\cref{eq:euler:alt}, see~\cref{sec:examples:2}.
In the next step we refine the triangulation uniformly, i.e., each triangle is divided into four son triangles (by Newest Vertex Bisection).
In our configuration this leads to four triangles which have diameter exactly half of their father triangle,
i.e., $h/2$.
Depending on which case we consider we then halve or quarter the time step, which ensures that either $\ts\simeq h$
or $\ts\simeq h^2$.

The initial data $u_h^0$ is chosen to be the $L^2$-projection of $u^0=u(0,\cdot)$ onto $\cS_0^1(\TT)$.
Note that $u$ is smooth (for all times), so that the initial data satisfies~\cref{eq:choice:initDataL2}
and~\cref{eq:choice:initDataEnergy}.

In all figures, we plot the errors that correspond to approximations of $u^N=u(T,\cdot)$, resp.,
$\ssigma^N=\ssigma(T,\cdot)$ over the number of degrees of freedom in $U_h$.
To that end define the following error quantities
\begin{align*}
  \err(u_h^N) &:= \norm{u^N-u_h^N}{}, \\
  \err(\nabla u_h^N) &:= \norm{\nabla(u^N-u_h^N)}{}, \\
  \err(\ssigma_h^N) &:= \norm{\ssigma^N-\ssigma_h^N}{}, \\
  \err(\div\ssigma_h^N) &:= \norm{\div(\ssigma^N-\ssigma_h^N)}{}.
\end{align*}
Triangles in the plots visualize the order of convergence, i.e., their negative slope $\alpha$ corresponds to $\OO(h^\alpha)$.

\subsection{Example~1}\label{sec:examples:1}

\begin{figure}
  \begin{center}
    \begin{tikzpicture}
\begin{loglogaxis}[
width=0.65\textwidth,
cycle list/Dark2-6,
cycle multiindex* list={
mark list*\nextlist
Dark2-6\nextlist},
every axis plot/.append style={ultra thick},
xlabel={degrees of freedom},
grid=major,
legend entries={\small $\err(u_h^N)$,\small $\err(\nabla u_h^N)$,
\small $\err(\ssigma_h^N)$, \small $\err(\div\ssigma_h^N)$},
legend pos=south west,
]
\addplot table [x=nDOF,y=errL2] {figures/Example1L2.dat};
\addplot table [x=nDOF,y=errGrad] {figures/Example1L2.dat};
\addplot table [x=nDOF,y=errSigmaL2] {figures/Example1L2.dat};
\addplot table [x=nDOF,y=errDivSigma] {figures/Example1L2.dat};

\logLogSlopeTriangle{0.9}{0.2}{0.62}{0.5}{black}{{\small $1$}};
\logLogSlopeTriangleBelow{0.82}{0.2}{0.1}{1}{black}{{\small $2$}};
\end{loglogaxis}
\end{tikzpicture}
  \end{center}
  \caption{Example from~\cref{sec:examples:1} with $\ts\simeq h^2$.}
  \label{fig:Example1L2}
\end{figure}

\begin{figure}
  \begin{center}
    \begin{tikzpicture}
\begin{loglogaxis}[
width=0.65\textwidth,
cycle list/Dark2-6,
cycle multiindex* list={
mark list*\nextlist
Dark2-6\nextlist},
every axis plot/.append style={ultra thick},
xlabel={degrees of freedom},
grid=major,
legend entries={\small $\err(u_h^N)$,\small $\err(\nabla u_h^N)$,
\small $\err(\ssigma_h^N)$, \small $\err(\div\ssigma_h^N)$},
legend pos=south west,
]
\addplot table [x=nDOF,y=errL2] {figures/Example1Energy.dat};
\addplot table [x=nDOF,y=errGrad] {figures/Example1Energy.dat};
\addplot table [x=nDOF,y=errSigmaL2] {figures/Example1Energy.dat};
\addplot table [x=nDOF,y=errDivSigma] {figures/Example1Energy.dat};

\logLogSlopeTriangle{0.9}{0.2}{0.45}{0.5}{black}{{\small $1$}};
\end{loglogaxis}
\end{tikzpicture}
  \end{center}
  \caption{Example from~\cref{sec:examples:1} with $\ts\simeq h$.}
  \label{fig:Example1Energy}
\end{figure}

We consider the solutions $\uu_h^N=(u_h^N,\ssigma_h^N)\in U_h$ of~\cref{eq:euler}.
\Cref{fig:Example1L2} shows the error quantities in the case where $\ts\simeq h^2$. By~\cref{thm:euler:L2} we expect
that $\err(u_h^N)\lesssim h^2$. This is also observed in~\cref{fig:Example1L2}.
Note that the other quantities converge at a lower rate.
In particular, observe that $\err(\div\ssigma_h^N)$ converges at the same rate as $\err(\ssigma_h^N)$, although from
our analysis (\cref{thm:euler:energy}) we can only infer that
\begin{align*}
  \norm{\uu^N-\uu_h^N}\ts \simeq \err(\nabla u_h^N) + \err(\ssigma_h^N) + \ts^{1/2}\err(\div\ssigma_h^N) =
  \OO(h+\ts)= \OO(h+h^2) = \OO(h).
\end{align*}

The results for the case where $\ts\simeq h$ are displayed in~\cref{fig:Example1Energy}. 
We observe that all error quantities, including $\err(u_h^N) = \norm{u^N-u_h^N}{}$, converge at the same rate.
We conclude that it is not only sufficient but in general also necessary to set $\ts\simeq h^2$ to obtain higher rates
for the $L^2$ error $\err(u_h^N)$.
As before we see that $\err(\div\ssigma_h^N)$ converges even at a higher rate than expected.

\subsection{Example~2}\label{sec:examples:2}

\begin{figure}
  \begin{center}
    \begin{tikzpicture}
\begin{loglogaxis}[
width=0.65\textwidth,
cycle list/Dark2-6,
cycle multiindex* list={
mark list*\nextlist
Dark2-6\nextlist},
every axis plot/.append style={ultra thick},
xlabel={degrees of freedom},
grid=major,
legend entries={\small $\err(u_h^N)$,\small $\err(\nabla u_h^N)$,
\small $\err(\ssigma_h^N)$, \small $\err(\div\ssigma_h^N)$},
legend pos=south west,
]
\addplot table [x=nDOF,y=errL2] {figures/Example2L2.dat};
\addplot table [x=nDOF,y=errGrad] {figures/Example2L2.dat};
\addplot table [x=nDOF,y=errSigmaL2] {figures/Example2L2.dat};
\addplot table [x=nDOF,y=errDivSigma] {figures/Example2L2.dat};

\logLogSlopeTriangle{0.9}{0.2}{0.62}{0.5}{black}{{\small $1$}};
\logLogSlopeTriangleBelow{0.82}{0.2}{0.1}{1}{black}{{\small $2$}};
\end{loglogaxis}
\end{tikzpicture}
  \end{center}
  \caption{Example from~\cref{sec:examples:2} with $\ts\simeq h^2$.}
  \label{fig:Example2L2}
\end{figure}

\begin{figure}
  \begin{center}
    \begin{tikzpicture}
\begin{loglogaxis}[
width=0.65\textwidth,
cycle list/Dark2-6,
cycle multiindex* list={
mark list*\nextlist
Dark2-6\nextlist},
every axis plot/.append style={ultra thick},
xlabel={degrees of freedom},
grid=major,
legend entries={\small $\err(u_h^N)$,\small $\err(\nabla u_h^N)$,
\small $\err(\ssigma_h^N)$, \small $\err(\div\ssigma_h^N)$},
legend pos=south west,
]
\addplot table [x=nDOF,y=errL2] {figures/Example2Energy.dat};
\addplot table [x=nDOF,y=errGrad] {figures/Example2Energy.dat};
\addplot table [x=nDOF,y=errSigmaL2] {figures/Example2Energy.dat};
\addplot table [x=nDOF,y=errDivSigma] {figures/Example2Energy.dat};

\logLogSlopeTriangle{0.9}{0.2}{0.45}{0.5}{black}{{\small $1$}};
\end{loglogaxis}
\end{tikzpicture}
  \end{center}
  \caption{Example from~\cref{sec:examples:2} with $\ts\simeq h$.}
  \label{fig:Example2Energy}
\end{figure}

We consider the solutions $\uu_h^N=(u_h^N,\ssigma_h^N)\in U_h$ from~\cref{eq:euler:alt}.
\Cref{fig:Example2L2,fig:Example2Energy} show the error quantities in the cases $\ts\simeq h^2$ and $\ts\simeq h$,
respectively.
We make the same observations as in~\cref{sec:examples:1}.

\bibliographystyle{abbrv}
\bibliography{literature}

\end{document}